\newcommand{\bk}{\backslash}
\newcommand{\lm}{\lambda}
\newcommand{\tm}{\widetilde{m}}
\newtheorem{lemma}[]{Lemma}
\newtheorem{cor}[lemma]{Corollary}
\newtheorem{theorem}[lemma]{Theorem}
\theoremstyle{definition}
\newtheorem{remark}[lemma]{Remark}
\tikzstyle{every node}=[circle, draw, fill=black,
\title{A Vertex-Weighted Tutte Symmetric Function, and Constructing Graphs with Equal Chromatic Symmetric Function}
\author{Jos\'e Aliste-Prieto\footnote{Departamento de Matemáticas, Universidad Andres Bello, Santiago.\newline Email: jose.aliste@unab.cl.\newline
Supported by CONICYT FONDECYT REGULAR 1160975.},
Logan Crew\footnote{Department of Mathematics, University of Pennsylvania, Philadelphia, PA, 19104.\newline Current address: Department of Combinatorics \& Optimization, University of Waterloo, Waterloo, ON, N2L 3E9.\newline Email: lcrew@uwaterloo.ca.}, Sophie Spirkl\footnote{Department of Mathematics, Princeton University, Princeton, NJ, 08544. \newline Current address: Department of Combinatorics \& Optimization, University of Waterloo, Waterloo, ON, N2L 3E9.\newline  Email:  sspirkl@uwaterloo.ca. \newline This material is based upon work supported by the National Science Foundation under Award No. DMS-1802201. \newline
We acknowledge the support of the Natural Sciences and Engineering Research Council of Canada (NSERC), [funding
reference number RGPIN-2020-03912]. \newline Cette recherche a \'et\'e financ\'ee par le Conseil de recherches en sciences naturelles et en g\'enie du Canada (CRSNG),
[num\'ero de r\'ef\'erence RGPIN-2020-03912]. },
Jos\'e Zamora\footnote{Departamento de Matemáticas, Universidad Andres Bello, Santiago.\newline Email: josezamora@unab.cl.
\newline
Supported by CONICYT FONDECYT REGULAR 1160975.}
}
\date{\today}
\begin{document}

\maketitle

\begin{abstract}

This paper has two main parts. First, we consider the Tutte symmetric function $XB$, a generalization of the chromatic symmetric function. We introduce a vertex-weighted version of $XB$, show that this function admits a deletion-contraction relation, and show that it is equivalent to a number of other vertex-weighted graph functions, namely the $W$-polynomial, the polychromate, and the weighted $(r,q)$-chromatic function. We also demonstrate that the vertex-weighted $XB$ admits spanning-tree and spanning-forest expansions generalizing those of the Tutte polynomial, and show that from this we may also derive a spanning-tree formula for the chromatic symmetric function.

Second, we give several methods for constructing nonisomorphic graphs with equal chromatic and Tutte symmetric functions, and use them to provide specific examples. In particular, we show that there are pairs of unweighted graphs of arbitrarily high girth with equal Tutte symmetric function, and arbitrarily large vertex-weighted trees with equal Tutte symmetric function.
    
\end{abstract}

\textit{\footnotesize Note: this paper was originally announced in \cite{delcon} with the working title ``Using Deletion-Contraction to Construct Graphs with Equal
Chromatic Symmetric Function".}

\section{Introduction}

The chromatic symmetric function $X_G$ of a graph $G$, introduced by Stanley in the 1990s \cite{stanley}, is an extension of the chromatic polynomial that (among other things) counts for each integer partition $\lm = (\lm_1,\dots,\lm_k)$ the number of partitions of $V(G)$ into stable sets of sizes $\lm_1,\dots,\lm_k$.  This function has seen a recent resurgence of interest, including research focusing on the expansion of $X_G$ in the bases of elementary symmetric functions \cite{huh, epos, dahl, foley} and Schur functions \cite{pau, paw, wang}, and the conjecture that $X_G$ distinguishes nonisomorphic trees \cite{trees, heil}.  Other results have extended the definition of $X_G$ in various ways to include representation theoretic and graph theoretic considerations, including chromatic quasisymmetric functions \cite{per, ell, wachs} and chromatic symmetric functions in noncommuting variables \cite{dahl2, noncomm}.  

In \cite{delcon}, the second and third authors extended the chromatic symmetric function to graphs $G$ equipped with vertex weights in the form of a function $w: V(G) \rightarrow \mathbb{N}$.  The extended function $X_{(G,w)}$ satisfies a natural deletion-contraction relation, which can be used to extend identities of $X_G$ to this broader class of graphs, and prove new results.

In this paper, we continue the work of \cite{delcon} in multiple ways.  Following an exposition in Section 2 of necessary background on graphs and symmetric functions, in Section 3 we extend the function $X_{(G,w)}$ to include Stanley's Tutte symmetric function \cite{stanley2}, which is a natural extension of the Tutte polynomial.  We show that the resulting function $XB_{(G,w)}(t,x_1,x_2,\dots)$ of $t$ and variables $x_1,x_2,\dots$ satisfies an edge deletion-contraction relation generalizing that of $X_{(G,w)}$.  

In Section 4 we show that $XB_{(G,w)}$ is a specialization of the $V$-polynomial of Ellis-Monaghan and Moffatt \cite{vpoly}, and is thus closely related to many other graph functions.  For example, we demonstrate that up to a change of variables the vertex-weighted version of $XB$ is equivalent to the $W$-polynomial of Noble and Welsh \cite{noble} by showing that the two functions satisfy the same base cases and recurrence relations, providing a strengthening Noble and Welsh's analogous result on the equivalence of unweighted $XB$ and the unweighted $W$-polynomial (or $U$-polynomial).  We similarly show that the vertex-weighted $XB$ is equivalent to a natural vertex-weighted extension of the polychromate of Brylawski \cite{brylawski} and the weighted $(r,q)$-chromatic function of Klazar, Loebl, and Moffatt \cite{potts}, generalizing proofs of equivalence in unweighted graphs made in the aforementioned references as well as the work of Merino and Noble \cite{merino} and Sarmiento \cite{sarm}.

In Section 5 we use the relationship between $XB$ and the $V$-polynomial to derive spanning-tree and spanning-forest expansions for $XB$.  We show that the spanning-tree expansion specializes to a well-known analogous expansion of the Tutte polynomial, and is a natural improvement of the $p$-basis expansion formula for $XB$ originally introduced in \cite{chowthesis}. From this formula we likewise derive an expansion of the chromatic symmetric function in terms of spanning trees with no external activity, and show that this expansion refines its classical $p$-basis expansion \cite{stanley}.

In Section 6, we use deletion-contraction relations to provide multiple original methods for constructing pairs of nonisomorphic graphs with equal $X_G$ and/or $XB_G$. In particular, we demonstrate graph pairs with arbitrarily high girth whose Tutte symmetric functions agree. 

In Section 7, we further use the equivalence between $XB_G$ and the $W$-polynomial of $G$ to find additional families of vertex-weighted graphs with the same $XB$, and in particular we show how to construct arbitrarily large vertex-weighted paths with equal $XB$ (similar results are found in the independent work \cite{aliniaeifard2020extended} by Aliniaeifard, Wang, and van Willigenburg).  

We conclude in Section 8 with further directions and conjectures.  We note how the examples in Sections 6 and 7 suggest new lines of research related to open problems regarding the chromatic symmetric function, particularly the conjecture that the chromatic symmetric function distinguishes nonisomorphic trees.

\section{Background}

\subsection{Fundamentals of Symmetric Functions and Graphs}

 An \emph{integer partition} (or just \emph{partition}) is a tuple $\lm = (\lm_1,\dots,\lm_k)$ of positive integers such that $\lm_1 \geq \dots \geq \lm_k$.  The integers $\lm_i$ are the \emph{parts} of $\lm$.  If $\sum_{i=1}^k \lm_i = n$, we say that $\lm$ is a partition of $n$, and we write $\lm \vdash n$, or $|\lm| = n$.  The number of parts $k$ is the \emph{length} of $\lm$, and is denoted by $l(\lm)$.  The number of parts equal to $i$ in $\lm$ is given by $r_i(\lm)$.

  A function $f(x_1,x_2,\dots) \in \mathbb{R}[[x_1,x_2,\dots]]$ is \emph{symmetric} if $f(x_1,x_2,\dots) = f(x_{\sigma(1)},x_{\sigma(2)},\dots)$ for every permutation $\sigma$ of the positive integers $\mathbb{N}$.  The \emph{algebra of symmetric functions} $\Lambda$ is the subalgebra of $\mathbb{R}[[x_1,x_2,\dots]]$ consisting of those symmetric functions $f$ that are of bounded degree (that is, there exists a positive integer $n$ such that every monomial of $f$ has degree $\leq n$).  Furthermore, $\Lambda$ is a graded algebra, with natural grading
  $$
  \Lambda = \bigoplus_{d=0}^{\infty} \Lambda^d
  $$
  where $\Lambda^d$ consists of symmetric functions that are homogeneous of degree $d$ \cite{mac,stanleybook}.

  Each $\Lambda^d$ is a finite-dimensional vector space over $\mathbb{R}$, with dimension equal to the number of partitions of $d$ (and thus, $\Lambda$ is an infinite-dimensional vector space over $\mathbb{R}$).  Some commonly-used bases of $\Lambda$ that are indexed by partitions $\lm = (\lm_1,\dots,\lm_k)$ include:
\begin{itemize}
  \item The monomial symmetric functions $m_{\lm}$, defined as the sum of all distinct monomials of the form $x_{i_1}^{\lm_1} \dots x_{i_k}^{\lm_k}$ with distinct indices $i_1, \dots, i_k$.

  \item The power-sum symmetric functions, defined by the equations
  $$
  p_n = \sum_{k=1}^{\infty} x_k^n, \hspace{0.3cm} p_{\lm} = p_{\lm_1}p_{\lm_2} \dots p_{\lm_k}.
  $$
  \item The elementary symmetric functions, defined by the equations
  $$
  e_n = \sum_{i_1 < \dots < i_n} x_{i_1} \dots x_{i_n}, \hspace{0.3cm} e_{\lm} = e_{\lm_1}e_{\lm_2} \dots e_{\lm_k}.
  $$
\end{itemize}

  We also make use of the \emph{augmented monomial symmetric functions}, defined by 
  $$
  \tm_{\lm} = \left(\prod_{i=1}^{\infty} r_i(\lm)!\right)m_{\lm}.
  $$
  
  Given a symmetric function $f$ and a basis $b$ of $\Lambda$, we say that $f$ is \emph{$b$-positive} if when we write $f$ in the basis $b$, all coefficients are nonnegative.

  We define the \emph{symmetric function involution} $\omega$ by $\omega(p_{\lm}) = $ $(-1)^{|\lm|-l(\lm)}p_{\lm}$.

  A \emph{graph} $G = (V,E)$ consists of a \emph{vertex set} $V$ and an \emph{edge multiset} $E$ where the elements of $E$ are (unordered) pairs of (not necessarily distinct) elements of $V$.  An edge $e \in E$ that contains the same vertex twice is called a \emph{loop}.  If there are two or more edges that each contain the same two vertices, they are called \emph{multi-edges}.  A \emph{simple graph} is a graph $G = (V,E)$ in which $E$ does not contain loops or multi-edges (thus, $E \subseteq \binom{V}{2}$).  If $\{v_1,v_2\}$ is an edge, we will write it as $v_1v_2 = v_2v_1$.  The vertices $v_1$ and $v_2$ are the \emph{endpoints} of the edge $v_1v_2$.  We will use $V(G)$ and $E(G)$ to denote the vertex set and edge multiset of a graph $G$, respectively.
  
  Two graphs $G$ and $H$ are said to be \emph{isomorphic} if there exists a bijective map $f: V(G) \rightarrow V(H)$ such that for all $v_1,v_2 \in V(G)$ (not necessarily distinct), the number of edges $v_1v_2$ in $E(G)$ is the same as the number of edges $f(v_1)f(v_2)$ in $E(H)$.

  The \emph{complement} of a simple graph $G = (V,E)$ is denoted $\overline{G}$, and is defined as $\overline{G} = (V, \binom{V}{2} \bk E)$, so in $\overline{G}$ every edge of $G$ is replaced by a nonedge, and every nonedge is replaced by an edge.

  A \emph{subgraph} of a graph $G$ is a graph $G' = (V',E')$ where $V' \subseteq V$ and $E' \subseteq E|_{V'}$, where $E|_{V'}$ is the set of edges with both endpoints in $V'$.  An \emph{induced subgraph} of $G$ is a graph $G' = (V',E|_{V'})$ with $V' \subseteq V$.  The induced subgraph of $G$ using vertex set $V'$ will be denoted $G|_{V'}$.  A \emph{stable set} of $G$ is a subset $V' \subseteq V$ such that $E|_{V'} = \emptyset$.  A \emph{clique} of $G$ is a subset $V' \subseteq V$ such that for every pair of distinct vertices $v_1$ and $v_2$ of $V'$, $v_1v_2 \in E(G)$.

  A \emph{path} in a graph $G$ is a nonempty sequence of edges $v_1v_2$, $v_2v_3$, \dots, $v_{k-1}v_k$ such that $v_i \neq v_j$ for all $i \neq j$.  The vertices $v_1$ and $v_k$ are the \emph{endpoints} of the path.  A \emph{cycle} in a graph is a nonempty sequence of distinct edges $v_1v_2$, $v_2v_3$, \dots, $v_kv_1$ such that $v_i \neq v_j$ for all $i \neq j$.  Note that in a simple graph every cycle must have at least $3$ edges, although in a nonsimple graph there may be cycles of size $1$ (a loop) or $2$ (multi-edges).  

  A graph $G$ is \emph{connected} if for every pair of vertices $v_1$ and $v_2$ of $G$ there is a path in $G$ with $v_1$ and $v_2$ as its endpoints.  The \emph{connected components} of $G$ are the maximal induced subgraphs of $G$ which are connected.  The number of connected components of $G$ will be denoted by $c(G)$.
  
  The \emph{complete graph} $K_n$ on $n$ vertices is the unique simple graph having all possible edges, that is, $E(K_n) = \binom{V}{2}$ where $V = V(K_n)$.

  Given a graph $G$, there are two commonly used operations that produce new graphs.  One is \emph{deletion}: given an edge $e \in E(G)$, the graph of $G$ \emph{with} $e$ \emph{deleted} is the graph $G' = (V(G), E(G) \bk \{e\})$, and is denoted $G \bk e$.  Likewise, if $S$ is a multiset of edges, we use $G \bk S$ to denote the graph $(V(G),E(G) \bk S)$.

  The other operation is the \emph{contraction of an edge} $e = v_1v_2$, denoted $G / e$.  If $v_1 = v_2$ ($e$ is a loop), we define $G / e = G \bk e$.  Otherwise, we create a new vertex $v^*$, and define $G / e$ as the graph $G'$ with $V(G') = (V(G) \bk \{v_1,v_2\}) \cup v^*$, and $E(G') = (E(G) \bk E(v_1, v_2)) \cup E(v^*)$, where $E(v_1,v_2)$ is the set of edges with at least one of $v_1$ or $v_2$ as an endpoint, and $E(v^*)$ consists of each edge in $E(v_1,v_2) \bk e$ with the endpoint $v_1$ and/or $v_2$ replaced with the new vertex $v^*$.  Note that this is an operation on a (possibly nonsimple) graph that identifies two vertices while keeping and/or creating multi-edges and loops.

  There is also a different version of edge contraction that is defined only on simple graphs.  In the case that $G$ is a simple graph, we define the \emph{simple contraction} $G \nmid e$ to be the same as $G / e$ except that after performing the contraction operation, we delete any loops and all but a single copy of each multi-edge so that the result is again a simple graph.  

  Let $G = (V(G),E(G))$ be a (not necessarily simple) graph.  A map $\kappa: V(G) \rightarrow \mathbb{N}$ is called a \emph{coloring} of $G$.  This coloring is called \emph{proper} if $\kappa(v_1) \neq \kappa(v_2)$ for all $v_1,v_2$ such that there exists an edge $e = v_1v_2$ in $E(G)$.  The \emph{chromatic symmetric function} $X_G$ of $G$ is defined as
  
  $$X_G(x_1,x_2,\dots) = \sum_{\kappa} \prod_{v \in V(G)} x_{\kappa(v)}$$ 
  where the sum runs over all proper colorings $\kappa$ of $G$.  Note that if $G$ contains a loop then $X_G = 0$, and $X_G$ is unchanged by replacing each multi-edge by a single edge.  
  
\subsection{Vertex-Weighted Graphs and their Colorings}

A \emph{vertex-weighted graph} $(G,w)$ consists of a graph $G$ and a weight function $w: V(G) \rightarrow \mathbb{N}$.  

Given two vertex-weighted graphs $(G_1,w_1)$ and $(G_2,w_2)$, we call a map $f: V(G_1) \rightarrow V(G_2)$ a \emph{w-isomorphism} if $f$ is an isomorphism of $G_1$ with $G_2$, and also for all $v \in V(G_1)$ we have $w_1(v_1) = w_2(f(v_1))$.

Given a vertex-weighted graph $(G,w)$ and a non-loop edge $e = v_1v_2 \in E(G)$ we define its \emph{contraction by e} to be the graph $(G/e,w/e)$, where $w/e$ is the weight function such that $(w/e)(v) = w(v)$ if $v$ is the not the contracted vertex $v^*$, and $(w/e)(v^*) = w(v_1) + w(v_2)$ (if $e$ is a loop, we define the contraction of $(G,w)$ by $e$ to be $(G \bk e, w)$).

In \cite{delcon}, the authors extended $X_G$ to vertex-weighted graphs as
$$
X_{(G,w)} = \sum_{\kappa} \prod_{v \in V(G)} x_{\kappa(v)}^{w(v)}
$$
where again the sum ranges over all proper colorings $\kappa$ of $G$.  In this setting the chromatic symmetric function admits the deletion-contraction relation \cite{delcon}
\begin{equation}\label{eq:delcon}
X_{(G,w)} = X_{(G \bk e, w)} - X_{(G/e,w/e)}
\end{equation}
as well as the version 
\begin{equation}\label{eq:delconsimple}
  X_{(G,w)} = X_{(G \bk e, w)} - X_{(G \nmid e,w/e)}  
\end{equation}
using simple contraction in the case that $G$ is simple.

Note also that if two vertex-weighted graphs are $w$-isomorphic, then they must have the same chromatic symmetric function.  The converse is not true even in the unweighted case \cite{stanley}. 

\section{The Weighted Version of the Tutte Symmetric Function}

  In this section, we extend the definition of the vertex-weighted chromatic symmetric function to include \emph{all} colorings of a graph, not just the proper ones.  To this end, for a given (not necessarily proper) coloring $\kappa$ of $G$, we define
$$
x_{\kappa}(G,w,t) = (1+t)^{c_{\kappa}(G)} \prod_{v \in V(G)} x_{\kappa(v)}^{w(v)}
$$
where $c_{\kappa}(G)$ is the number of edges $v_1v_2 \in E(G)$ such that $\kappa(v_1) = \kappa(v_2)$.
  We then define the \emph{Tutte symmetric function of a vertex-weighted graph}\footnote{The function is also known as the \emph{bad-coloring chromatic symmetric function}, hence the notation $XB$.  We continue using $XB$ as it is more common in the literature and less confusing in this context than the original $X_G(\textbf{x};t)$.} $(G,w)$ to be the following analogue of the Tutte symmetric function introduced by Stanley in \cite{stanley2}:
\begin{equation}\label{eq:XBdef}
  XB_{(G,w)}(t,x_1,x_2, \dots) = \sum_{\kappa} x_{\kappa}(G,w,t)
\end{equation} 
where the sum is over all colorings $\kappa$ of $G$ (not just the proper ones).  This name comes from the fact that the original function admits the Tutte polynomial $T_G(x,y)$ as a specialization via the relation \begin{equation}\label{eq:XBtutte}
XB_G(t,\underbrace{1,1,\dots,1}_{\text{n 1s}} ,0,0,\dots) = n^{c(G)}t^{|V(G)|-c(G)}T_G\left(\frac{t+n}{n},t+1\right)
\end{equation} 
where $c(G)$ is the number of connected components of $G$.

Given a partition $\pi$ of $V(G)$ (into nonempty blocks), let $e(\pi)$ be the number of edges of $G$ whose endpoints lie in the same block of $\pi$, and $\lm(\pi)$ the partition whose parts are the total weights of the blocks of $\pi$.  We may verify the following lemma, an extension of the corresponding result on unweighted graphs:
\begin{lemma}[\cite{jo}]\label{lem:XBmon}

\begin{equation}\label{eq:XBmon}
XB_{(G,w)} = \sum_{\pi \vdash V(G)} (1+t)^{e(\pi)}\tm_{\lm(\pi)}
\end{equation}
\end{lemma}

\begin{proof}
For $\lm = (\lm_1,\dots,\lm_k)$ it suffices to show that the coefficient of $x_1^{\lm_1} \dots x_k^{\lm_k}$ in $XB_{(G,w)}$ is given by 
$$
\sum_{\substack{\pi \vdash V(G) \\ \lm(\pi) = \lm}} (1+t)^{e(G)}\left(\prod_{i=1}^{\infty} r_i(\lm)!\right). 
$$

From the defining equation \eqref{eq:XBdef} it is clear that we may only get the monomial $x_1^{\lm_1} \dots x_k^{\lm_k}$ by choosing a coloring $\kappa$ of $G$ with vertices of total weight $\lm_i$ receiving the color $i$ for each $i$, and then giving it the coefficient $(1+t)^{c_{\kappa}(G)}$.  Each such coloring $\kappa$ corresponds to a partition $\pi \vdash V(G)$ into $k$ blocks where the $i^{th}$ block consists of vertices colored $i$ by $\kappa$, and these receive a coefficient of $(1+t)^{e(\pi)}$ since the monochromatic edges are exactly those that are entirely contained within a block of $\pi$.  

Conversely, each $\pi \vdash V(G)$ contributes (with coefficient $(1+t)^{e(\pi)}$) exactly $\left(\prod_{i=1}^{\infty} r_i(\lm)!\right)$ colorings with monomial $x_1^{\lm_1} \dots x_k^{\lm_k}$: the one where the $i^{th}$ block of $\pi$ gets color $i$, permuting the color choices among blocks of the same total weight, and the conclusion follows.
\end{proof}

\iffalse{
\begin{cor}\label{cor:xbm}
From $XB_{(G,w)}$, we can determine the degree sequence of $G$, and whether $G$ is connected.
\end{cor}

\begin{proof}
Let $W = \sum_{v \in V(G)} w(v)$. We examine the coefficients for different choices of $\tm_{\lm}$ arising from Lemma \ref{lem:XBmon}. There is exactly one such choice with $l(\lm) = 1$, which is $\pi = V(G)$, giving the term $(1+t)^{|E(G)|}\tm_{W}$, so we may determine $|E(G)|$.

Now, if $v_1, \dots, v_{|V(G)|}$ are the vertices of $G$, let $\pi_i \in \Pi(G)$ be the partition with two parts, one containing only $v_i$, and one containing all other vertices.  Then the partitions $\pi_i, \, i = 1, \dots, |V(G)|$ generate monomials $(1+t)^{|E(G)|-d(v_i)}\tm_{(W-w(v_i), w(v_i))}$, and these are exactly the terms in the sum for which $\tm_{\lm(\pi)}$ has length two. Thus we may derive the multiset $\{|E(G)|-d(v_i)\, | \, i=1,\dots,n\}$, and thus the degree sequence.
\end{proof}
}\fi

We use the convention $0^0 = 1$, so that when $t = -1$ we have
$$
XB_{(G,w)}(-1,x_1,x_2, \dots) = X_{(G,w)}(x_1,x_2, \dots).
$$

On vertex-weighted graphs, $XB_{(G,w)}$ admits the following deletion-contraction relation that generalizes the deletion-contraction relation of \cite{delcon}:

\begin{lemma}\label{lem:XBdelcon}

Let $(G,w)$ be a vertex-weighted graph.  For all $e \in E(G)$,
\begin{equation}\label{eq:XBdelcon}
XB_{(G,w)} = XB_{(G \bk e, w)} + tXB_{(G / e, w / e)}.
\end{equation}

\end{lemma}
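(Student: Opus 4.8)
The plan is to fix a non-loop edge $e = v_1v_2 \in E(G)$ and classify the colorings of $G$ — which are literally the same maps $\kappa \colon V(G) \to \mathbb{N}$ as the colorings of $G \bk e$, since these graphs share a vertex set — according to whether $e$ is monochromatic. If $\kappa(v_1) \neq \kappa(v_2)$, then $c_\kappa(G) = c_\kappa(G \bk e)$, so $\kappa$ contributes the same term $x_\kappa(G \bk e, w, t)$ to both $XB_{(G,w)}$ and $XB_{(G \bk e, w)}$. If $\kappa(v_1) = \kappa(v_2)$, then $c_\kappa(G) = c_\kappa(G \bk e) + 1$, so the term of $\kappa$ in $XB_{(G,w)}$ equals $(1+t)\,x_\kappa(G \bk e, w, t)$. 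Writing that factor as $1 + t$ and regrouping yields
\[
XB_{(G,w)} = XB_{(G \bk e, w)} + t \sum_{\kappa \,:\, \kappa(v_1) = \kappa(v_2)} x_\kappa(G \bk e, w, t),
\]
so it remains to identify the last sum with $XB_{(G/e, w/e)}$.

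For that I would use the natural bijection between colorings $\kappa$ of $G$ with $\kappa(v_1) = \kappa(v_2)$ and colorings $\kappa'$ of $G/e$: send $\kappa$ to the coloring $\kappa'$ that agrees with $\kappa$ on every vertex other than the contracted vertex $v^*$ and sets $\kappa'(v^*) = \kappa(v_1) = \kappa(v_2)$. Under this correspondence the monomial $\prod_{v} x_{\kappa(v)}^{w(v)}$ is preserved: all vertices other than $v_1, v_2$ contribute identically, while the two factors $x_{\kappa(v_1)}^{w(v_1)} x_{\kappa(v_2)}^{w(v_2)}$ collapse into the single factor $x_{\kappa'(v^*)}^{(w/e)(v^*)} = x_{\kappa(v_1)}^{w(v_1) + w(v_2)}$ — which is exactly why $(w/e)(v^*)$ is defined to be $w(v_1) + w(v_2)$.

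The step requiring care, and the only place one could slip, is checking that the exponent of $1+t$ is also preserved, i.e. that $c_\kappa(G \bk e) = c_{\kappa'}(G/e)$ for corresponding $\kappa, \kappa'$. Here I would invoke the canonical bijection between $E(G \bk e) = E(G) \bk \{e\}$ and $E(G/e)$ coming from the definition of contraction (edges avoiding $v_1, v_2$ are unchanged; an edge meeting $v_1$ or $v_2$ other than $e$ has that endpoint renamed to $v^*$; an edge parallel to $e$ becomes a loop at $v^*$; loops at $v_1$ or $v_2$ become loops at $v^*$) and verify case by case that an edge is monochromatic for $\kappa$ iff the matched edge is monochromatic for $\kappa'$. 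The delicate cases are parallel edges and loops at $v_1$ or $v_2$: a parallel edge $v_1v_2$ is monochromatic for $\kappa$ precisely because we are in the case $\kappa(v_1) = \kappa(v_2)$, and it becomes a loop at $v^*$, which is automatically monochromatic for $\kappa'$; a loop at $v_1$ or $v_2$ is monochromatic before and after for the trivial reason that a loop's endpoints coincide. Hence $x_\kappa(G \bk e, w, t) = x_{\kappa'}(G/e, w/e, t)$ term by term, and summing over the bijection gives $\sum_{\kappa : \kappa(v_1) = \kappa(v_2)} x_\kappa(G \bk e, w, t) = XB_{(G/e, w/e)}$, proving \eqref{eq:XBdelcon}. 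Finally I would note the loop case $v_1 = v_2$ is consistent with the argument (or handle it directly): every coloring makes a loop $e$ monochromatic, so $XB_{(G,w)} = (1+t)\,XB_{(G \bk e, w)}$, which matches \eqref{eq:XBdelcon} since $G/e = G \bk e$ and $w/e = w$ by definition. I do not expect a genuine obstacle — the proof is bookkeeping of colorings and monochromatic edges — with the $c_\kappa$-matching verification in the presence of multi-edges and loops being the one spot deserving attention.
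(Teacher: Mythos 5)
Your proof is correct and follows essentially the same route as the paper: split the colorings of $G$ (equivalently of $G\setminus e$) according to whether $e$ is monochromatic, and identify the monochromatic ones with colorings of $G/e$, with the convention $(w/e)(v^*)=w(v_1)+w(v_2)$ making the monomials match. The only differences are in bookkeeping: the paper expands the right-hand side and compensates with factors $(1+t)^{-1}$ (treating $t=-1$ separately via the known relation for $X$), whereas you expand the left-hand side and multiply by $(1+t)$, which sidesteps that special case; your explicit verification that $c_\kappa(G\setminus e)=c_{\kappa'}(G/e)$ in the presence of parallel edges and loops, and your direct treatment of the loop case, spell out details the paper leaves implicit.
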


\begin{proof}

First, note that when $t = -1$, the deletion-contraction relation \eqref{eq:XBdelcon} reduces to \eqref{eq:delcon}, so we may assume $t \neq -1$.  Furthermore, the case when $e$ is a loop follows immediately from the definition of $XB$, so we may assume that $e$ is not a loop.  

Let $v_1$ and $v_2$ be the endpoints of $e$.  We start with the right-hand side of \eqref{eq:XBdelcon} and expand using the definition \eqref{eq:XBdef} of $XB$:

 $$
 XB_{(G \bk e, w)} + tXB_{(G / e, w / e)} = $$
 $$\left(\sum_{\kappa: V(G \bk e) \rightarrow \mathbb{N}} x_{\kappa}(G \bk e, w,t)\right) + t\left(\sum_{\kappa: V(G / e) \rightarrow \mathbb{N}} x_{\kappa}(G / e, w / e, t)\right).
 $$

 Note that all colorings of $G$ are also colorings of $G \bk e$, and vice versa.  We split the $\kappa$ in the first summand based on $\kappa(v_1)$ and $\kappa(v_2)$.  In those $\kappa$ where $\kappa(v_1) \neq \kappa(v_2)$, we have $c_{\kappa}(G \backslash e) = c_{\kappa}(G)$, so $x_{\kappa}(G \bk e, w , t) = x_{\kappa}(G, w, t)$.  In all $\kappa$ with $\kappa(v_1) = \kappa(v_2)$, we have $c_{\kappa}(G \backslash e) = c_{\kappa}(G)-1$ because of the missing edge $e$, so for these $\kappa$, we have $x_{\kappa}(G \bk e, w , t) = (1+t)^{-1}x_{\kappa}(G, w, t)$.

 For the second summand, note that every coloring $\kappa$ of $G / e$ corresponds naturally to a coloring $\kappa$ of $G$ with $\kappa(v_1) = \kappa(v_2)$, and vice-versa (we will use the same $\kappa$ to denote both of these colorings in a slight abuse of notation).  For these $\kappa$ we will have $c_{\kappa}(G / e) = c_{\kappa}(G) - 1$ since we are missing the contracted edge $e$, and thus for each such $\kappa$  we will have $x_{\kappa}(G / e, w / e,t) = (1+t)^{-1}x_{\kappa}(G,w,t)$.  Putting everything together, we have

 \begin{align*}
 XB_{(G \bk e, w)} + tXB_{(G / e, w / e)} &= \sum_{\kappa: V(G \bk e) \rightarrow \mathbb{N}} x_{\kappa}(G \bk e, w,t) + t\sum_{\kappa: V(G / e) \rightarrow \mathbb{N}} x_{\kappa}(G / e, w / e, t) \\
 &= \sum_{\substack{\kappa: V(G) \rightarrow \mathbb{N} \\ \kappa(v_1) \neq \kappa(v_2)}} x_{\kappa}(G,w,t) + \sum_{\substack{\kappa: V(G) \rightarrow \mathbb{N} \\ \kappa(v_1) = \kappa(v_2)}} (1+t)^{-1}x_{\kappa}(G,w,t)  \\ &\hphantom{word} +t\sum_{\substack{\kappa: V(G) \rightarrow \mathbb{N} \\ \kappa(v_1) = \kappa(v_2)}} (1+t)^{-1}x_{\kappa}(G,w,t) \\
 &= \sum_{\kappa: V(G) \rightarrow \mathbb{N}} x_{\kappa}(G,w,t) \\ &= XB_{(G,w)}
\end{align*}
as desired.
\end{proof}

As a consequence of this relation, we can derive a $p$-basis expansion formula by simply replacing $(-1)$s with $t$s in (\cite{delcon}, Lemma 3) to give the following analogue of the original formula in \cite{stanley2}:

\begin{cor}
\begin{equation}\label{eq:XBpow}
  XB_{(G,w)}(t,x_1,x_2,\dots)  = \sum_{S \subseteq E(G)} t^{|S|}p_{\lm(G,w,S)}.
\end{equation}
where $\lm(G,w,S)$ is the integer partition whose parts are the total weights of the connected components of $((V(G),S),w)$.
\end{cor}

\section{Relating the Tutte Symmetric Function With Other Graph Functions}

Note that the deletion-contraction relation \eqref{eq:XBdelcon} together with $XB_{(G,w)} = p_{(w_1,\dots,w_k)}$ when $(G,w)$ is a graph with no edges and vertices of weights $w_1 \geq \dots \geq w_k$ can be taken as an alternative definition of $XB_{(G,w)}$.  

This formulation is closely related to the more general $V$-polynomial, defined as a function $V(G,J,w,\{x_j: j \in J\}, \{\gamma_e: e \in E(G)\})$ where 
\begin{itemize}
\item $G$ is a graph;
\item $J$ is a torsion-free commutative semigroup (e.g. $(\mathbb{N}, +)$ or $(2^{\mathbb{N}}, \cap)$);
\item $w: V(G) \rightarrow J$ is a vertex-weight function;
\item The function uses a set of commuting indeterminates $x_j$ indexed by elements $j \in J$, and a set of commuting indeterminates $\gamma_e$ indexed by edges $e \in E(G)$.
\end{itemize}

For brevity, in what follows we will often fix $J$ and the variables $x_j$, and consider $V$ as a function $V_{(G,w)}$ on vertex-weighted graphs. Given these inputs, the $V$-polynomial is defined by the following relations \cite{vpoly}:

\begin{itemize}
\item If $(G,w)$ is a graph with no edges and vertices of weights $w_1, \dots, w_k$, we have $V_{(G,w)} = x_{w_1} \dots x_{w_k}$.
\item If $e \in E(G)$ is a loop, $V_{(G,w)} = (\gamma_e+1)V_{(G \bk e, w)}$.
\item If $e \in E(G)$ is not a loop, $V_{(G,w)} = V_{(G \bk e, w)} + \gamma_e V_{(G / e, w / e)}$.
\end{itemize}

It may be shown from these relations that the $V$-polynomial satisfies \cite{vpoly}
\begin{equation}\label{eq:vsum}
    V_{(G,w)} = \sum_{S \subseteq E(G)} \prod_{c \in C_G(S)} x_{|c|} \prod_{e \in S} \gamma_e
\end{equation}
where $C_G(S)$ is the set of connected components of the graph $(V(G),S)$, and for $c \in C_G(S)$, $|c|$ is the sum (using the operation of $J$) of the weights of the vertices in $c$.

Using either the recurrence relations or the expansion \eqref{eq:vsum}, we may verify that $XB_{(G,w)}$ is a special case of the $V$-polynomial in which $J = (\mathbb{N}, +)$, $\gamma_e = t$ for all $e \in E(G)$, and each variable $x_n$ is replaced by the power-sum symmetric function $p_n(x_1,x_2,\dots)$.  That is,  
\begin{equation}\label{eq:vtoxb}
V(G, (\mathbb{N}, +), w, p_1, p_2, \dots; t) = XB_{(G,w)}(t,x_1,x_2,\dots).
\end{equation}

The function $XB_{(G,w)}$ is also closely related to other specializations of the $V$-polynomial.  A notable example is the $W$-polynomial from invariant theory \cite{noble} (and its unweighted version, the $U$-polynomial), which has been studied both in its own right \cite{noble2} and in relation to the chromatic symmetric function \cite{trees,proper}.

This (nonsymmetric) function $W_{(G,w)}(y,x_1,x_2,\dots)$ on vertex-weighted graphs is defined by the following relations:

\begin{itemize}
\item If $(G,w)$ is a graph with no edges and vertices of weights $w_1 \geq \dots \geq w_k$, we have $W_{(G,w)} = x_{w_1} \dots x_{w_k}$.
\item If $e \in E(G)$ is a loop, $W_{(G,w)} = yW_{(G \backslash e, w)}$.
\item If $e \in E(G)$ is not a loop, $W_{(G,w)} = W_{(G \backslash e, w)} + W_{(G / e, w / e)}$.
\end{itemize}

Note that if $J = (\mathbb{N}, +)$ and $\gamma_e = y-1$ for all $e \in E(G)$, then \begin{equation}\label{eq:vtow}
    V_{(G,w)} = (y-1)^{|V(G)|}W_{(G,w)}(y,x_1(y-1)^{-1}, x_2(y-1)^{-1},\dots)
    \end{equation} 
    so the $W$-polynomial may be derived from the $V$-polynomial \cite{vpoly}.

One can prove either by specializing \eqref{eq:vsum} or induction on the number of edges as in \cite{noble} that the $W$-polynomial satisfies
\begin{equation}\label{eq:WPow}
W_{(G,w)}(y,x_1,x_2,\dots) = \sum_{S \subseteq E(G)} x_{c_1}\dots x_{c_k}(y-1)^{|S|+k-|V(G)|}
\end{equation}
where $c_1,\dots,c_k$ are the total weights of the connected components of the vertex-weighted graph $((V(G),S),w)$.

We say that two functions on vertex-weighted graphs $(G,w)$ are \emph{equivalent} if given one, we can entirely recover the other, without knowing the graph $(G,w)$.  We show the following generalization of (\cite{noble}, Theorem 6.2):
\begin{lemma}\label{lem:Wpoly}
The functions $XB_{(G,w)}(t,x_1,x_2, \dots)$ and $W_{(G,w)}(y,x_1,x_2, \dots)$ are equivalent.
\end{lemma}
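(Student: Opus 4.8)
The plan is to show that $XB_{(G,w)}$ and $W_{(G,w)}$ encode the same information by exhibiting an explicit invertible change of variables connecting them, using the expansion formulas \eqref{eq:XBpow} and \eqref{eq:WPow} as the bridge. The key observation is that both functions are sums over subsets $S \subseteq E(G)$ indexed by the same combinatorial data: the multiset of total weights of the connected components of $((V(G),S),w)$, together with $|S|$. Writing $\lambda(G,w,S)$ for the partition recording these component weights and $k = l(\lambda(G,w,S)) = c((V(G),S))$, formula \eqref{eq:XBpow} reads $XB_{(G,w)} = \sum_{S} t^{|S|} p_{\lambda(G,w,S)}$, while \eqref{eq:WPow} reads $W_{(G,w)} = \sum_{S} m_{\lambda(G,w,S)}^{W}\,(y-1)^{|S|+k-|V(G)|}$ where I am writing $m^{W}_{\lambda}$ for the monomial $x_{\lambda_1}\cdots x_{\lambda_k}$ in the (commuting, algebraically independent) $W$-variables. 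So both functions are generating functions, over the same index set, whose terms are determined by the pair $(\lambda(G,w,S),|S|)$.

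First I would group terms: collect, for each partition $\mu \vdash |w| := \sum_v w(v)$ and each integer $j$, the number $a_{\mu,j}(G,w)$ of subsets $S \subseteq E(G)$ with $\lambda(G,w,S) = \mu$ and $|S| = j$. Then $XB_{(G,w)} = \sum_{\mu,j} a_{\mu,j}\, t^{j} p_{\mu}$ and $W_{(G,w)} = \sum_{\mu,j} a_{\mu,j}\, (y-1)^{j+l(\mu)-|V(G)|}\, x_{\mu_1}\cdots x_{\mu_{l(\mu)}}$. Since $|V(G)|$ is recoverable from either function (e.g.\ as the total degree in the $x$-variables, or from the top-degree behavior), and since the power sums $\{p_\mu\}$ are a basis of $\Lambda$ while the monomials $\{x_{\mu_1}\cdots x_{\mu_k}\}$ in the $W$-variables are linearly independent, each numerical coefficient $a_{\mu,j}$ can be read off from $XB_{(G,w)}$ (as the coefficient of $t^j$ in the $p_\mu$-coordinate) and equally from $W_{(G,w)}$ (after factoring out the known power of $(y-1)$, as the coefficient of $(y-1)^j$ in the $x_{\mu_1}\cdots x_{\mu_k}$-coordinate). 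Thus from either function we recover the full array $(a_{\mu,j}(G,w))$, and from that array we can reconstruct the other function. This establishes equivalence in both directions simultaneously.

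There is one bookkeeping subtlety to handle carefully, which I expect to be the main obstacle: the exponent $j + l(\mu) - |V(G)|$ appearing in $W$ need not be nonnegative as $j$ ranges, so $W_{(G,w)}$ genuinely lives in $\mathbb{R}[y,y^{-1}][[x_1,x_2,\dots]]$ (or one clears denominators as in \eqref{eq:vtow}); I need to confirm that the map $j \mapsto j + l(\mu) - |V(G)|$ is injective for fixed $\mu$ (it visibly is, being affine in $j$), so that distinct values of $j$ contribute to distinct powers of $(y-1)$ within a fixed $x$-monomial, and hence no cancellation or collision obscures the $a_{\mu,j}$. Dually, in $XB$ distinct $j$ give distinct powers of $t$ within a fixed $p_\mu$, so the same non-collision holds. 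I would also note explicitly that $|V(G)|$ is determined by $(G,w)$ but is being treated as \emph{known} only in the sense that it is extractable from each of the two functions — for $W$ it is $\max$ over monomials of the number of variables minus \dots\ actually more cleanly: setting all $x_i$ to a single variable recovers a univariate specialization from which $|V(G)|$ is visible via \eqref{eq:WPow}; for $XB$ one uses \eqref{eq:XBpow} and the fact that $|V(G)| = \max_{S} l(\lambda(G,w,S))$, attained at $S = \emptyset$, i.e.\ $|V(G)|$ is the largest length of a partition appearing with nonzero coefficient. With these points pinned down, the equivalence follows, generalizing (\cite{noble}, Theorem 6.2) from the unweighted setting to vertex-weighted graphs.
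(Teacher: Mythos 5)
Your argument is correct, but it is not the route the paper takes. The paper proves a sharper statement: it exhibits the explicit change of variables $XB_{(G,w)} = t^{|V(G)|}W_{(G,w)}\bigl(t+1,\tfrac{p_1}{t},\tfrac{p_2}{t},\dots\bigr)$ and verifies it by induction on the number of edges, checking that the substitution is compatible with the base cases and with the deletion-contraction recursions defining both functions; this shows the two invariants are literally the same function up to a change of variables, not merely mutually recoverable. (The paper does mention, without carrying it out, the alternative of checking that this substitution carries \eqref{eq:XBpow} into \eqref{eq:WPow}, which is the Noble--Welsh argument; your proof is essentially a coefficient-level version of that alternative.) What your approach buys is directness: grouping both subset expansions by the pair $(\mu,j)=(\lambda(G,w,S),|S|)$, noting that the counts $a_{\mu,j}$ are extractable from either side because the $p_\mu$ are linearly independent over $\mathbb{R}[t]$ and the monomials $x_{\mu_1}\cdots x_{\mu_{l(\mu)}}$ are linearly independent over $\mathbb{R}[y]$, and recovering $|V(G)|$ intrinsically (maximal length of a partition occurring, attained at $S=\emptyset$ with no cancellation since all $a_{\mu,j}\ge 0$) — this cleanly gives equivalence in the paper's sense. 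Two small remarks: your worry about negative exponents of $(y-1)$ is unfounded, since adding an edge to $S$ decreases the number of components by at most one, so $|S|+l(\lambda(G,w,S))-|V(G)|\ge 0$ always and $W$ is an honest polynomial in $y$ (working in Laurent polynomials is harmless but unnecessary); and your bookkeeping implicitly reproduces the paper's substitution \eqref{eq:XBsub}, so with one more sentence you could state that identity and recover the paper's stronger conclusion as well.
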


\begin{proof}
We actually prove a stronger statement, that given $W_{(G,w)}$, we may recover the $p$-basis expansion of $XB_{(G,w)}$ via the substitution
\begin{equation}\label{eq:XBsub}
XB_{(G,w)} = t^{|V(G)|}W_{(G,w)}\left(t+1,\frac{p_1}{t},\frac{p_2}{t},\dots, \frac{p_k}{t}, \dots\right)
\end{equation}
and conversely, given the $p$-basis expansion of $XB_{(G,w)}$, we may recover $W_{(G,w)}$ by dividing by $t^{|V(G)|}$, setting  $t = y-1$, and replacing each $p_k$ with $tx_k$.  This stronger statement may be proven as a simple vertex-weighted generalization of the argument from (\cite{noble}, Theorem 6.2) by showing that these substitutions take \eqref{eq:XBpow} to \eqref{eq:WPow} and vice-versa.

We provide a different proof by showing that this substitution works not just for these equations, but for the base cases and inductive steps of the recursive definitions for $XB_{(G,w)}$ and $W_{(G,w)}$.  In this sense these functions are not only equivalent, but essentially the same up to a change of variables.

The base cases for both functions are vertex-weighted graphs with no edges.  Let $(G,w)$ be a vertex-weighted graph with no edges and vertices of weights $w_1 \geq \dots \geq w_k$.  Then $XB_{(G,w)} = p_{w_1}\dots p_{w_k}$, $W_{(G,w)} = x_{w_1}\dots x_{w_k}$, and we now verify that the substitution works.  Going from $W$ to $XB$ we have:

$$
x_{w_1}\dots x_{w_k} \mapsto t^k\left(\frac{p_{w_1}}{t}\right) \dots \left(\frac{p_{w_k}}{t}\right) = 
p_{w_1} \dots p_{w_k}
$$
and the converse is analogous.

For the inductive step, assume that we have demonstrated that this substitution is valid for graphs with at most $m$ edges for some $m$.  Let $(G,w)$ be a vertex-weighted graph with $m+1$ edges and let $e$ be an edge of $G$.  Starting with the $W$-polynomial and using deletion-contraction we have two cases.  First, if $e$ is a loop, then $W_{(G,w)} = yW_{(G \bk e, w)}$.  Then applying our substitution we may derive $(t+1)XB_{(G \bk e, w)} = XB_{(G, w)}$, and the converse is analogous.

If $e$ is not a loop, then $W_{(G,w)} = W_{(G \bk e, w)} + W_{(G / e, w / e)}$ (note that $G \bk e$ and $G / e$ have a different number of vertices).  We make the substitution $x_i = \frac{p_i}{t}$, $y = t+1$, and multiply by $t^{|V(G)|}$.  Then by the inductive hypothesis the resulting function is $XB_{(G \bk e, w)} + tXB_{(G / e, w / e)} = XB_{(G,w)}$ as desired, and again the converse process of recovering $W$ from $XB$ is analogous.  
\end{proof}

The function $XB_{(G,w)}$ is also related to the weighted $(r,q)$-chromatic function of $\cite{potts}$.  For a vertex-weighted graph $(G,w)$ with $n$ vertices, this function is defined as 
$$M_{(G,w)}(r,q) = \sum_{S \subseteq E(G)} (-1)^{|S|} \prod_{c \in C(S)} \sum_{i=0}^{n-1}r^{w(c)q^i}$$
where $C(S)$ is the set of connected components of $(V(G),S)$, and $w(c)$ is the total weight of the component $c$.

This function has a natural extension with an additional parameter in the form
\begin{equation}\label{eq:B}
B_{(G,w)}(r,q,t) =  \sum_{S \subseteq E(G)} t^{|S|} \prod_{c \in C(S)} \sum_{i=0}^{n-1}r^{w(c)q^i}.
\end{equation}

Note that from \eqref{eq:B} it is clear that $B_{(G,w)}$ (and thus also $M_{(G,w)}$) may be derived from the $V$-polynomial by taking $J = (\mathbb{N}, +)$ and $\gamma_e = t$ for all $e$, and then substituting $x_n = \sum_{i=0}^{n-1}r^{w(c)q^i}$.

Using the arguments from (\cite{potts}, Section 3) and adjusting them to the vertex-weighted case it is easy to show that

\begin{lemma}\label{lem:rqchrom}
$M_{(G,w)}(r,q)$ is equivalent to $X_{(G,w)}(x_1,x_2,\dots)$, and
$B_{(G,w)}(r,q,t)$ is equivalent to $XB_{(G,w)}(t,x_1,x_2,\dots)$.
\end{lemma}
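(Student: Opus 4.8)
The plan is to mimic the structure of the proof of Lemma~\ref{lem:Wpoly}, but working with the $(r,q)$-chromatic functions instead of the $W$-polynomial. The key observation is that both $M$ and $B$ are specializations of the $V$-polynomial (as noted right before the statement), obtained by setting $\gamma_e = -1$ or $\gamma_e = t$ respectively and substituting $x_n \mapsto \sum_{i=0}^{n-1} r^{nq^i}$ in the weighted case --- more precisely $x_{w(c)} \mapsto \sum_{i=0}^{n-1} r^{w(c)q^i}$, which one sees is consistent since the $V$-polynomial only ever evaluates $x$ at component weights. So the two statements in the lemma will follow from a single computation once we understand how the $p$-basis expansion \eqref{eq:XBpow} of $XB_{(G,w)}$ transforms under the substitution $x_n \mapsto \sum_{i=0}^{n-1} r^{nq^i}$, since the $X$ case is recovered by setting $t = -1$.

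First I would recall that $XB_{(G,w)}(t, x_1, x_2, \dots) = \sum_{S \subseteq E(G)} t^{|S|} p_{\lambda(G,w,S)}$, where $\lambda(G,w,S)$ is the partition whose parts are the total weights of the connected components of $(V(G), S)$. Now I would substitute: setting each $p_n = \sum_{j \ge 1} x_j^n$ and then specializing to $x_j = r^{q^{j-1}}$ for $j = 1, \dots, n$ and $x_j = 0$ otherwise gives $p_n \mapsto \sum_{i=0}^{n-1} r^{nq^i}$, which is exactly the substitution turning $XB_{(G,w)}$ into $B_{(G,w)}(r,q,t)$ by comparison with \eqref{eq:vsum} and \eqref{eq:B}. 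Thus $B_{(G,w)}$ is obtained from $XB_{(G,w)}$ by a fixed specialization, giving one direction of the equivalence immediately (and likewise $M_{(G,w)} = B_{(G,w)}(r,q,-1)$ is obtained from $X_{(G,w)} = XB_{(G,w)}(-1, \cdot)$).

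For the reverse direction --- recovering $XB_{(G,w)}$ (equivalently its $p$-basis expansion) from $B_{(G,w)}$ --- I would argue that the specialization map $\Lambda \to \mathbb{R}[r^{\pm}, q^{\pm}]$ sending $p_n \mapsto \sum_{i=0}^{n-1} r^{nq^i}$, applied to a homogeneous symmetric function of degree $d = |V(G)|$ (note $XB_{(G,w)}$ is homogeneous of degree $|w| = \sum_v w(v)$, not $|V(G)|$; I should track the correct degree $|w|$ here), is injective on the relevant finite-dimensional space once we allow $n$ to be large enough. Concretely, since $p_\lambda$ for $\lambda \vdash d$ form a basis of $\Lambda^d$, and the images $\prod_i \left(\sum_{j=0}^{n-1} r^{\lambda_i q^j}\right)$ are distinct as Laurent polynomials in $r, q$ for distinct $\lambda$ (their supports, viewed as sets of exponent vectors, determine $\lambda$ --- this is where a Vandermonde-type or leading-term argument enters), the map is injective, so the coefficient of each $t^{|S|} p_\lambda$ can be read off. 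This recovers \eqref{eq:XBpow}, hence $XB_{(G,w)}$. The $M$/$X$ statement is the $t = -1$ specialization of the same argument.

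The main obstacle is the reverse-direction injectivity claim: I need to verify carefully that the specialization $p_n \mapsto \sum_{i=0}^{n-1} r^{nq^i}$ really does separate the power-sum basis elements $p_\lambda$ that can actually arise (those with $|\lambda| = |w|$ and $\ell(\lambda) \le |V(G)|$), and to confirm that $n = |V(G)|$ terms suffice rather than needing $n$ unbounded --- the cited argument in \cite{potts} presumably handles exactly this point in the unweighted case, so the real work is checking that nothing about vertex weights breaks it. Everything else (the forward specialization, the handling of loops, the reduction $t = -1$) is routine bookkeeping following the patterns already established for the $W$-polynomial in Lemma~\ref{lem:Wpoly}.
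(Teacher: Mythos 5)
Your approach matches the paper's: the paper does not give a self-contained proof of this lemma, but simply asserts that the arguments of \cite{potts}, Section 3, carry over to the vertex-weighted setting, and your plan --- the forward direction by specializing \eqref{eq:XBpow} via $p_m \mapsto \sum_{i=0}^{n-1} r^{mq^i}$ (equivalently $x_j = r^{q^{j-1}}$ for $1 \le j \le n$ and $x_j = 0$ otherwise), the reverse direction by injectivity of that specialization, and setting $t=-1$ for the $M$/$X$ statement --- is exactly that adaptation, spelled out in more detail than the paper provides.

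One substantive caution about the step you correctly flag as the main obstacle: distinctness of the images of the various $p_\lambda$ is not sufficient, since you are inverting a linear map; you need the images to be linearly independent. (Also note the images are not Laurent polynomials in $r$ and $q$; the exponents of $r$ are themselves polynomials in $q$, so one should either treat $q$ as a sufficiently large integer or work formally with symbols $r^{P(q)}$.) The standard way to close this, and the content of the cited argument, is in two steps. First, for an integer $q$ larger than $|w| = \sum_v w(v)$, the substitution $x_j = r^{q^{j-1}}$ sends a monomial $x_1^{a_1}\cdots x_n^{a_n}$ of degree at most $|w|$ to $r^{a_1 + a_2 q + \cdots + a_n q^{n-1}}$, and these exponents are base-$q$ expansions, hence pairwise distinct; so the evaluation is injective on the whole space of polynomials in $x_1,\dots,x_n$ of bounded degree, not merely injective on a set of basis elements. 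Second, one needs that restriction to $n = |V(G)|$ variables is injective on the span of $\{p_\lambda : \lambda \vdash |w|,\ \ell(\lambda) \le n\}$, which are the only $p_\lambda$ occurring in \eqref{eq:XBpow}; this holds because $p_\lambda$ expands in the monomial basis over partitions obtained by merging parts of $\lambda$ (hence of length at most $\ell(\lambda) \le n$, so no relevant $m_\mu$ vanishes in $n$ variables), with a triangular transition matrix having nonzero diagonal entries. Neither step is affected by the vertex weights beyond the bookkeeping you already note (the degree is $|w|$ rather than $|V(G)|$), and quantities such as $|V(G)|$ and $|w|$ can be read off from simple specializations of either function, so the ``without knowing the graph'' requirement in the definition of equivalence causes no trouble. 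With these two points made explicit, your argument is complete and is the same proof the paper is pointing to.
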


Finally, we mention a closely related graph function that is not a specialization of the $V$-polynomial.  Retaining the notation used for the $\tm$-basis expansion of $XB$ given in \eqref{eq:XBmon}, define the \emph{polychromate} of a vertex-weighted graph $(G,w)$ as
\begin{equation}\label{eq:polychrom}
\nu_{(G,w)}(y,x_1,x_2,\dots) = \sum_{\pi \vdash V(G)} y^{e(\pi)}x_{\lm(\pi)} 
\end{equation}
where here letting $\lm = \lm(\pi)$ we have $x_{\lm(\pi)} = x_{\lm_1} \cdots x_{\lm_{l(\lm)}}$.

This is a vertex-weighted generalization of a function originally introduced by Brylawski \cite{brylawski}.  Unfortunately, even in this vertex-weighted form, $\nu_{(G,w)}$ is not a specialization of the $V$-polynomial, as when $(G,w)$ has no edges and vertices of weights $w_1,\dots,w_k$ we find that $\nu_{(G,w)}$ is equal to the sum of $x_{w_1}\dots x_{w_k}$ and all of the $x_{\lm}$ where $\lm$ is a coarsening of the partition $(w_1,\dots,w_k)$.  Even modifying the weight set $J$ does not give a reasonable fix to this problem.

Nonetheless, we may easily see by comparing \eqref{eq:polychrom} with \eqref{eq:XBmon} that the vertex-weighted polychromate is equivalent to the vertex-weighted $XB$ and thus to the vertex-weighted $(r,q)$-chromatic function and the $W$-polynomial.  This extends previously known results that showed the equivalence of these four functions on unweighted graphs \cite{potts,merino,sarm}.  A more thorough summary of these functions and their properties on unweighted graphs is given in \cite{jo}.

It is reasonable to ask what advantages are introduced by using the vertex-weighted Tutte symmetric function as opposed to any of these equivalent graph functions. For one, the theory of symmetric function bases may be applied to find encoded information that is much more difficult to detect using the other functions. It is already known that the chromatic and Tutte symmetric functions on vertex-weighted graphs encode information, such as an enumeration of ordered pairs of acyclic orientations and certain maps on their sinks \cite{delcon, modular} or intersections of maximal stable sets \cite{multi}, that to the best of the authors' knowledge have not been replicated by these other functions. Furthermore, as the Tutte polynomial is the universal graph polynomial with a deletion-contraction relation \cite{univ}, it is natural to try to extend its properties to vertex-weighted graphs.  In the next section we will derive further expansions of the Tutte symmetric function that naturally generalize classical expansions of the Tutte polynomial, lending solid evidence to the claim that the Tutte symmetric function is in some sense the natural vertex-weighted extension of the Tutte polynomial.

\begin{section}{Spanning Tree and Spanning Forest Expansions for $XB$}

Properties of the $V$-polynomial specialize naturally to properties of $XB$.  In particular, by considering results in \cite{pottstrees} we may derive spanning tree and spanning forest expansions for $XB$ that are natural generalizations of well-known formulas for the Tutte polynomial.

We will need the following definitions: A \emph{spanning forest} of a graph $G$ is an acyclic subgraph that contains all vertices of $G$.  A \emph{spanning tree} of $G$ is a spanning forest of $G$ with the same number of connected components as $G$.  In what follows, we assume that the edges of $G$ have been given some arbitrary total ordering.  Given a fixed spanning tree $T$ of $G$, we say that an edge $f \in T$ is \emph{internally active with respect to T} if it is the smallest edge in the set $\{e \in E(G): (T \bk f) \cup e \text{ is a spanning tree} \}$, and \emph{internally inactive with respect to T} otherwise.  An edge $f \notin T$ is said to be \emph{externally active with respect to T} if $f$ is the smallest edge in the unique cycle of $T \cup f$, and \emph{externally inactive with respect to T} otherwise.  We may also extend the notion of external activity to spanning forests $F$ by defining an edge $f \notin F$ to be externally inactive with respect to $F$ if $F \cup f$ is acyclic, and otherwise applying the same definition as for trees.

\begin{theorem}[\cite{pottstrees}, Theorems 5.1 and 6.2]
Let $(G,w)$ be a vertex-weighted graph with some arbitrary total order on its edges, and let $T(G)$ be the set of spanning trees of $G$.  For any $T \in T(G)$,  let $ii(T), ia(T), ei(T), ea(T)$ denote respectively the number of internally inactive, internally active, externally inactive, and externally active edges of $G$ with respect to $T \in T(G)$.  Also, let $II(T)$ be the set of internally inactive edges of $G$ with respect to $T$.  Then
\begin{equation}\label{eq:tree}
XB_{(G,w)} = \sum_{T \in T(G)} t^{ii(T)}(t+1)^{ea(T)}XB_{(T,w) / II(T)}.
\end{equation}

Additionally, let $F(G)$ be the set of spanning forests of $G$.  For $F \in F(G)$, let $|F|$ denote the number of edges of $F$, let $ea(F)$ denote the number of externally active edges of $F$, and let $\lm(F)$ denote the partition whose parts are the total weights of the connected components of $F$.  Then
\begin{equation}\label{eq:forest}
XB_{(G,w)} = \sum_{F \in F(G)} t^{|F|}(t+1)^{ea(F)}p_{\lm(F)}.
\end{equation}
\end{theorem}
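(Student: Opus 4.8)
The plan is to obtain both identities by specializing the corresponding expansions of the $V$-polynomial. Recall from \eqref{eq:vtoxb} that $XB_{(G,w)}$ is the image of $V_{(G,w)}$ under the homomorphism $\gamma_e \mapsto t$ (for every $e \in E(G)$) and $x_n \mapsto p_n$; since this substitution commutes with edge deletion and contraction, it suffices to establish the $V$-polynomial analogues. Writing $EA(T)$ (resp.\ $EA(F)$) for the set of edges externally active with respect to a spanning tree $T$ (resp.\ spanning forest $F$), these are
\begin{align*}
V_{(G,w)} &= \sum_{T\in T(G)}\Big(\prod_{e\in II(T)}\gamma_e\Big)\Big(\prod_{e\in EA(T)}(1+\gamma_e)\Big)\, V_{(T,w)/II(T)}, \\
V_{(G,w)} &= \sum_{F\in F(G)}\Big(\prod_{e\in F}\gamma_e\Big)\Big(\prod_{e\in EA(F)}(1+\gamma_e)\Big)\prod_{c\in C_G(F)}x_{|c|},
\end{align*}
which are Theorems 5.1 and 6.2 of \cite{pottstrees}. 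Applying the substitution to the first line gives \eqref{eq:tree}, via $\prod_{e\in II(T)}\gamma_e \mapsto t^{ii(T)}$, $\prod_{e\in EA(T)}(1+\gamma_e) \mapsto (t+1)^{ea(T)}$, and $V_{(T,w)/II(T)} \mapsto XB_{(T,w)/II(T)}$; applying it to the second gives \eqref{eq:forest}, via $\prod_{e\in F}\gamma_e \mapsto t^{|F|}$, $\prod_{e\in EA(F)}(1+\gamma_e)\mapsto (t+1)^{ea(F)}$, and $\prod_{c\in C_G(F)}x_{|c|} \mapsto p_{\lm(F)}$.

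For completeness I would also sketch the $V$-polynomial expansions, starting from the subset sum \eqref{eq:vsum}. For the forest expansion, associate to each $S\subseteq E(G)$ the spanning forest $F(S)\subseteq S$ produced by scanning the edges of $S$ in \emph{decreasing} order and keeping an edge exactly when it does not close a cycle with the edges kept so far. One checks that each edge of $S\setminus F(S)$ closes a cycle all of whose other edges were scanned earlier, hence are larger, so it is the smallest edge of its fundamental cycle with respect to $F(S)$ and is therefore externally active; conversely, if $F$ is a spanning forest and $B\subseteq EA(F)$ then $F(F\cup B)=F$. Thus the fibers of $F(\cdot)$ are exactly the sets $\{F\cup B : B\subseteq EA(F)\}$, and they partition $2^{E(G)}$. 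Since adding an externally active edge to a spanning forest only closes a cycle, it does not change the partition of $V(G)$ into connected components, so $\prod_{c\in C_G(F\cup B)}x_{|c|} = \prod_{c\in C_G(F)}x_{|c|}$; summing \eqref{eq:vsum} over each fiber and factoring out $\sum_{B\subseteq EA(F)}\prod_{e\in B}\gamma_e = \prod_{e\in EA(F)}(1+\gamma_e)$ yields the forest expansion.

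For the tree expansion I would instead use the classical partition $2^{E(G)} = \bigsqcup_{T\in T(G)}\{\,S : II(T)\subseteq S\subseteq T\cup EA(T)\,\}$ into activity intervals, i.e.\ the two-pass greedy decomposition used in \cite{pottstrees}. Fixing $T$ and writing a set in its interval as $S = II(T)\cup A\cup B$ with $A\subseteq IA(T)$, $B\subseteq EA(T)$, the key point is that each $e\in EA(T)$ has $C_e\setminus e \subseteq II(T)$: every edge of the fundamental cycle $C_e$ other than $e$ lies in the fundamental bond of $e$ and is larger than $e$, hence is internally inactive. Therefore the $B$-edges leave the component partition unchanged, so the $x$-monomial of $S$ equals that of $II(T)\cup A$; summing over $B$ produces $\prod_{e\in EA(T)}(1+\gamma_e)$, and summing over $A\subseteq IA(T)$ produces $\prod_{e\in II(T)}\gamma_e$ times the full subset sum \eqref{eq:vsum} for the forest $(T,w)/II(T)$, namely $\prod_{e\in II(T)}\gamma_e\cdot V_{(T,w)/II(T)}$.

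The main difficulty here is bookkeeping rather than substance: one must confirm that the total order and the ``smallest edge of its fundamental cycle/bond'' conventions used here match those of \cite{pottstrees} (this is what makes the decreasing-order greedy and the interval decomposition valid), and one must verify that the specialization $\gamma_e\mapsto t$, $x_n\mapsto p_n$ commutes with contraction so that $V_{(T,w)/II(T)} \mapsto XB_{(T,w)/II(T)}$ — which is immediate on comparing \eqref{eq:vsum} and \eqref{eq:XBpow} for the contracted graph, the $p_n$ being algebraically independent so that the substitution is well defined.
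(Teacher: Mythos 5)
Your proposal is correct and takes essentially the same route as the paper: the paper obtains \eqref{eq:tree} and \eqref{eq:forest} precisely by specializing Theorems 5.1 and 6.2 of \cite{pottstrees} for the $V$-polynomial through \eqref{eq:vtoxb} (setting $\gamma_e \mapsto t$ and $x_n \mapsto p_n$, applied also to the contracted graph $(T,w)/II(T)$), which is all the paper offers by way of proof. Your extra sketches of the underlying $V$-polynomial expansions (the decreasing-order greedy fibration for forests and the activity-interval decomposition for trees) are sound, but they go beyond the paper, which delegates those arguments entirely to \cite{pottstrees}.
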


The spanning forest expansion \eqref{eq:forest} was also already known in an equivalent form for the $W$-polynomial \cite{noble}.  Upon taking $t = -1$ (and $0^0 = 1$) this formula reduces to a vertex-weighted generalization of the chromatic symmetric function analogue of Whitney's Broken Circuit Theorem (\cite{stanley}, Theorem 2.9).

On the other hand, by using the substitution formula \eqref{eq:XBtutte} between $XB$ and the Tutte polynomial, we may verify that equation \eqref{eq:tree} is a direct generalization of the well-known formula
$$
T_G(x,y) = \sum_{T \in T(G)} x^{ia(T)}y^{ea(T)}
$$
which provides further strong justification for the choice of $XB$ as the natural symmetric function analogue of $T_G$.  

Furthermore, retaining the total order on the edges in Theorem 5, consider the mapping $M: 2^{|E(G)|} \rightarrow T(G)$ defined as follows: 
\begin{enumerate}
    \item Input $S \subseteq E(G)$ and the total order of $E(G)$.  We will output $T$, the edges of a spanning tree of $G$.  We start with $T = S$.
    \item Inspect each edge of $S$ in order from least to greatest.  When inspecting an edge $e$, if it is part of a cycle in $(V(G),T)$, remove it from $T$.  
    \item Then, inspect each edge of $G \bk S$ in order from least to greatest.  When inspecting an edge $e$, if adding it to $(V(G),T)$ will not create a cycle, add $e$ to $T$.
    \item Output $M(S) = (V(G),T)$.
\end{enumerate}
Conversely, for $T \in T(G)$, let $II(T)$ be the set of internally inactive edges of $T$, and $EI(T)$ the set of externally inactive edges.  Then it is easy to verify that $M^{-1}(T)$ will consist of those $S \subseteq E(G)$ such that $II(T) \subseteq S$ and $EI(T) \cap S = \emptyset$ (and where $S$ can contain any subset of the internally and externally active edges of $T$).  

Thus, starting from the spanning tree formula \eqref{eq:tree}, if for each $T \in T(G)$ we expand $(t+1)^{ea(T)}$ and $XB_{(T,w)/ II(T)}$ using the $p$-basis expansion \eqref{eq:XBpow}, the monomials of the form $t^ip_{\lm}$ will be exactly those monomials of the $p$-basis formula for the whole graph $(G,w)$ coming from $S \in M^{-1}(T)$.  In this manner, the spanning tree expansion \eqref{eq:tree} also represents a way to refine the full $p$-basis expansion of $XB$.

Furthermore, specializing at $t = -1$ yields
\begin{equation}\label{eq:spanX}
X_{(G,w)} = \sum_{\substack{T \in T(G) \\ ea(T) = 0}} (-1)^{ii(T)}X_{(T,w) / II(T)}
\end{equation}
which provides a spanning tree formula for the chromatic symmetric function.  This is interesting in its own right, and will be discussed further in Section 8.

\end{section}

\begin{section}{Constructing Graphs with Equal Chromatic and Tutte Symmetric Functions}

As with any graph function, it is natural to consider the extent to which the chromatic symmetric function distinguishes nonisomorphic graphs.  The original chromatic symmetric function $X_G$ fails to distinguish nonisomorphic graphs with as few as five vertices \cite{stanley}, and Orellana and Scott \cite{ore} used a modular relation on triangles to construct families of infinitely many pairs of graphs with equal chromatic symmetric function\footnote{In fact, a recent result of Penaguiao \cite{raul} shows that in every pair of nonisomorphic graphs with the same chromatic symmetric function, one may be transformed into the other via finitely many applications of the relation from \cite{ore}.}.

The deletion-contraction relation on $X_{(G,w)}$ and $XB_{(G,w)}$ is a powerful and simple tool for finding such constructions.  Throughout this section, we will repeatedly use the following simple but fundamental lemma:

\begin{lemma}\label{lem:eqx}
Let $(G_1,w_1)$ and $(G_2,w_2)$ be vertex-weighted graphs, and let $e_1 \in E(G_1)$ and $e_2 \in E(G_2)$ be edges.
\begin{itemize}
\item If $X_{(G_1 \bk e_1, w_1)} = X_{(G_2 \bk e_2, w_2)}$ and $X_{(G_1 \nmid e_1, w_1 / e_1)} = X_{(G_2 \nmid e_2, w_2 / e_2)}$, then $X_{(G_1,w_1)} = X_{(G_2,w_2)}$.
\item If $XB_{(G_1 \bk e_1, w_1)} = XB_{(G_2 \bk e_2, w_2)}$, and $XB_{(G_1 / e_1, w_1 / e_1)} = XB_{(G_2 / e_2, w_2 / e_2)}$, then $XB_{(G_1,w_1)} = XB_{(G_2,w_2)}$.
\end{itemize}
\end{lemma}

\begin{proof}
These claims follow immediately from equation \eqref{eq:delconsimple} and Lemma \ref{lem:XBdelcon} respectively.
\end{proof}

In particular both parts of Lemma \ref{lem:eqx} hold when their equalities are replaced by $w$-isomorphism of the corresponding graphs.  Note that in the case of $XB$ we cannot replace contraction by simple contraction because unlike with the chromatic symmetric function, graphs that differ only by replacing edges with multi-edges or vice versa still have different $XB$. 

\begin{subsection}{Constructions with Split Graphs}

A \emph{bipartite graph} is a graph that has a proper $2$-coloring, that is, a graph whose vertices may be partitioned into two stable sets.  A \emph{split graph} is a graph that arises from taking a simple bipartite graph $G$ with $V(G)$ partitioned into nonempty stable sets $S_1$ and $S_2$, and switching all of the nonedges in either (but not both) of $G|_{S_1}$ and $G|_{S_2}$ to edges.  Thus, the vertices of a split graph may be partitioned (not necessarily uniquely) into a stable set and a clique.  The class of split graphs can also be characterized by the property that they contain no induced subgraph isomorphic to a five-vertex cycle, a four-vertex cycle, or the complement of a four-vertex cycle $\cite{split}$.

  There is a natural way noted by Loebl and Sereni \cite{loebl} to associate to any possibly non-simple (unweighted) graph a corresponding simple split graph: given a graph $G = (V,E)$, with $V(G) = \{v_1,\dots,v_n\}$ and $E(G) = \{e_1,\dots,e_m\}$.  Then the \emph{split graph sp(G) corresponding to G}  has vertex set $V(sp(G)) = \{t_1,t_2,\dots,t_n,t_{n+1},\dots,t_{n+m}\}$, and edge set $E(sp(G)) = \{t_it_j : 1 \leq i < j \leq n\} \cup \{t_it_{n+j}, t_{i'}t_{n+j} : e_j = v_iv_{i'} \text{ in } G\}$.  In other words, $sp(G)$ is formed by taking the vertices of $G$, making them into a clique, and then adding a ``hat'' corresponding to each edge of $G$.  Using the above notation, we say that vertex $t_{n+j}$ of $sp(G)$ is the \emph{splitting vertex} of the edge $e_j = v_iv_{i'}$ in $G$.  The construction is illustrated in Figure \ref{Fig:spExample}.

\begin{figure}[!htb]
\begin{center}
\begin{tikzpicture}[scale=1.5]

\node[fill=black, circle] at (0, 1)(v1){};
\node[fill=black, circle] at (1, 1)(v2){};
\node[fill=black, circle] at (0, 0)(v3){};
\node[fill=black, circle] at (1, 0)(v4){};

\draw[black, thick] (v1) -- (v2);
\draw[black, thick] (v1) -- (v3);
\draw[black, thick] (v2) -- (v4);

\draw (2, 0.5) coordinate (RA) node[fill=none,draw=none, right] {\LARGE $\bf{\rightarrow} $};
\end{tikzpicture}
\hspace{1cm}
\begin{tikzpicture}[scale=1.5]

\node[fill=black, circle] at (3, 1)(v1){};
\node[fill=black, circle] at (4, 1)(v2){};
\node[fill=black, circle] at (3, 0)(v3){};
\node[fill=black, circle] at (4, 0)(v4){};
\node[fill=black, circle] at (2.3, 0.5)(v13){};
\node[fill=black, circle] at (4.7, 0.5)(v24){};
\node[fill=black, circle] at (3.5, 1.7)(v12){};

\draw[black, thick] (v1) -- (v2);
\draw[black, thick] (v1) -- (v3);
\draw[black, thick] (v1) -- (v4);
\draw[black, thick] (v2) -- (v3);
\draw[black, thick] (v2) -- (v4);
\draw[black, thick] (v3) -- (v4);
\draw[black, thick] (v1) -- (v12);
\draw[black, thick] (v2) -- (v12);
\draw[black, thick] (v1) -- (v13);
\draw[black, thick] (v3) -- (v13);
\draw[black, thick] (v2) -- (v24);
\draw[black, thick] (v4) -- (v24);

\end{tikzpicture}
\end{center}
\caption{An example of the split graph construction.}
\label{Fig:spExample}
\end{figure}
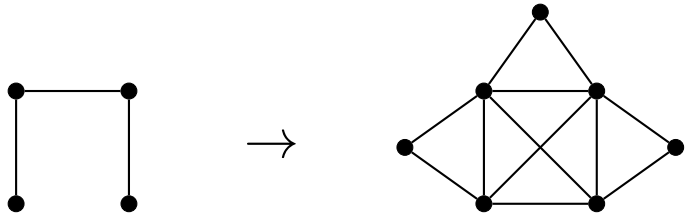

For any two nonisomorphic graphs of more than three vertices the corresponding split graphs are clearly nonisomorphic (since the largest clique of the split graph must correspond to the vertex set of the original graph), so distinguishing split graphs is (up to some processing) equivalent to distinguishing all graphs.  This motivates considering which functions distinguish split graphs; in \cite{loebl} Loebl and Sereni conjecture that the $U$-polynomial (equivalently the Tutte symmetric function $XB$) does.

It is natural to consider whether the chromatic symmetric function itself already distinguishes
split graphs. Unfortunately it does not, and in particular, the following lemma allows for the
construction of infinitely many pairs of split graphs that have equal chromatic symmetric functions.

This construction will make use of graph automorphisms.  An \emph{automorphism} of a graph $G$ is an isomorphism $f$ of $G$ with itself, and likewise a $w$\emph{-automorphism} of a vertex-weighted graph $(G,w)$ is a map $f$ that is a $w$-isomorphism of $(G,w)$ with itself.

Additionally, for $v_1,v_2 \in V(G)$, if $v_1v_2 \notin E(G)$, we use the shorthand $G \cup v_1v_2$ to mean the graph $(V(G), E(G) \cup v_1v_2)$.  For brevity if $v \in V(G)$ we also use $v$ to refer to the corresponding vertex of $sp(G)$.

\begin{lemma}\label{Lem:Split}

  Let $G$ be an unweighted graph.  Suppose $G$ has (not necessarily distinct) vertices $u,u',v,v'$ such that:
  \begin{itemize}
  \item $uv \notin E(G)$ and $u'v' \notin E(G)$.
  \item There is some automorphism of $G$ that maps $u$ to $u'$, and some (possibly different) automorphism of $G$ that maps $v$ to $v'$.
  \end{itemize}
  Then $X_{sp(G \cup uv)} = X_{sp(G \cup u'v')}$.
\end{lemma}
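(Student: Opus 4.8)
The plan is to apply Lemma~\ref{lem:eqx} with $(G_1,w_1) = sp(G \cup uv)$ and $(G_2,w_2) = sp(G \cup u'v')$, both carrying the all-ones weight function. Let $x$ be the splitting vertex of the edge $uv$ in $sp(G \cup uv)$, so that $x$ is adjacent to exactly $u$ and $v$ and deleting $x$ recovers $sp(G)$; similarly let $x'$ be the splitting vertex of $u'v'$ in $sp(G \cup u'v')$. I would take $e_1 = xu$ and $e_2 = x'u'$, so that by Lemma~\ref{lem:eqx} it suffices to check that $sp(G \cup uv) \bk xu$ and $sp(G \cup u'v') \bk x'u'$ have equal chromatic symmetric function, and likewise for the simple contractions $sp(G \cup uv) \nmid xu$ and $sp(G \cup u'v') \nmid x'u'$ (carrying the weights inherited from simple contraction).

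For the deletions: removing $xu$ leaves $x$ as a pendant vertex whose unique neighbor is $v$, so $sp(G \cup uv) \bk xu$ is $sp(G)$ together with a single pendant vertex attached at $v$, and $sp(G \cup u'v') \bk x'u'$ is $sp(G)$ with a single pendant vertex attached at $v'$. Here I would use the fact that every automorphism $\phi$ of $G$ lifts to an automorphism $\tilde\phi$ of $sp(G)$, acting on the clique vertices as $\phi$ acts on $V(G)$ and on the hat vertices as $\phi$ acts on $E(G)$; applying $\tilde\phi$ for an automorphism with $\phi(v) = v'$, and extending it to send the new pendant to the new pendant, yields an isomorphism of the two deleted graphs, hence equality of their chromatic symmetric functions.

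For the contractions: since $u$ and $v$ are distinct clique vertices of $sp(G)$ they are already adjacent there, so contracting $xu$ merges $x$ into $u$ without creating any new edge, and there is no multi-edge or loop for the simple contraction to discard; the result is exactly $sp(G)$ with the vertex $u$ re-weighted to $2$ and all other weights equal to $1$, and symmetrically $sp(G \cup u'v') \nmid x'u'$ is $sp(G)$ with $u'$ re-weighted to $2$. The lift of an automorphism of $G$ with $\phi(u) = u'$ is then a $w$-isomorphism between these two vertex-weighted graphs, so their chromatic symmetric functions coincide. Both hypotheses of Lemma~\ref{lem:eqx} now hold, and it gives $X_{sp(G \cup uv)} = X_{sp(G \cup u'v')}$.

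The substantive content, beyond invoking the deletion-contraction lemma, lies in the two structural identifications: that $sp(G \cup uv) \bk xu$ is genuinely ``$sp(G)$ plus a pendant at $v$'', and that $sp(G \cup uv) \nmid xu$ is genuinely ``$(sp(G),w)$ with a doubled weight at $u$'' --- the latter relying on $u$ and $v$ already being joined in the clique so that nothing survives simplification. The lifting of automorphisms of $G$ to automorphisms of $sp(G)$ should be stated once and then used twice, once unweighted (for the deletions) and once weight-preserving (for the contractions). If one permits $u = v$, so that $uv$ is a loop and $x$ a degree-one hat vertex, the same choice of $e_1, e_2$ goes through with only cosmetic changes, provided $u' = v'$ as well.
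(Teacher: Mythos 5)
Your proposal is correct and follows essentially the same route as the paper: the same choice of edges $ux$ and $u'x'$ at the splitting vertices, reduction via Lemma~\ref{lem:eqx}, and the lifting of automorphisms of $G$ to $sp(G)$, applied once with $f(v)=v'$ for the deletions and once with $f(u)=u'$ for the contractions. One small correction: contracting $ux$ \emph{does} produce a parallel pair between the merged vertex and $v$ (the clique edge $uv$ together with the hat edge $xv$), so your claim that there is no multi-edge for the simple contraction to discard is inaccurate --- indeed this doubled edge is exactly why $\nmid$ rather than $/$ is required, and why the construction does not extend to $XB$ --- but your identification of $sp(G\cup uv)\nmid ux$ as $sp(G)$ with weight $2$ at $u$ is nevertheless right, so the argument stands.
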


\begin{proof}
  Throughout this proof we will omit the weight function $w$ from $(G,w)$; the vertex weights will always all be 1 unless otherwise specified.
  Let $G$ be as stated.  In $sp(G \cup uv)$, let $x$ be the splitting vertex of $uv$, and likewise in $sp(G \cup u'v')$ let $x'$ be the splitting vertex of $u'v'$.  By applying Lemma \ref{lem:eqx} to edge $ux$ of $sp(G \cup uv)$ and edge $u'x'$ of $sp(G \cup u'v')$ it suffices to show that the graphs $sp(G \cup uv) \bk ux$ and $sp(G \cup u'v') \bk u'x'$ are $w$-isomorphic, and that the graphs $sp(G \cup uv) \nmid ux$ and $sp(G \cup u'v') \nmid u'x'$ are $w$-isomorphic.

  Note that if $f: V(G) \rightarrow V(G)$ is an automorphism of $G$, we may extend it to an automorphism of $sp(G)$ by defining that for $z \in V(sp(G)) \bk V(G)$, if $z$ is the splitting vertex of $ab$, $f(z)$ is the splitting vertex of $f(a)f(b)$.

  Let $G_x$ denote $sp(G \cup uv) \bk ux $, and let $G_{x'}$ denote $sp(G \cup u'v') \bk u'x'$.  Then $V(G_x) \bk \{x\} = V(G_{x'}) \bk \{x'\}$ and $E(G_x) \bk \{vx\} = E(G_{x'}) \bk \{v'x'\}$.  By hypothesis there is an automorphism $f$ of $G$ with $f(v) = v'$, which may be extended to an automorphism of $sp(G)$ as described above.  It is easy to verify that if we extend $f$ once more to a function $f: V(G_x) \rightarrow V(G_{x'})$ by defining $f(x) = x'$, then $f$ is a $w$-isomorphism of the (unweighted) graphs $G_x$ and $G_{x'}$.
  
We now address the graphs with contracted edges. Upon applying simple contraction to the edge $ux \in sp(G \cup uv)$, we let $z$ be the vertex formed by contraction (now with weight $2$), and as we are applying simple contraction. Likewise, when applying contraction to the edge $u'x' \in sp(G \cup u'v')$, we let $z'$ be the vertex formed by contraction (now with weight $2$).

  Let $G_{z}$ denote $sp(G \cup uv) \nmid ux$ and let $G_{z'}$ denote $sp(G \cup u'v') \nmid u'x'$.  Then $V(G_z) \bk \{z\} = V(G_{z'}) \bk \{z'\}$.  By hypothesis there is an automorphism of $G$ taking $u$ to $u'$ that extends to an automorphism of $sp(G)$.  By extending $f$ to a function $f : V(G_z) \rightarrow V(G_{z'})$ with $f(z) = z'$ (instead of $f(u) = u'$), this $f$ is a $w$-isomorphism of $sp(G \cup uv) \nmid ux$ and $sp(G \cup u'v') \nmid u'x'$.
\end{proof}

Thus, when $G \cup uv$ is not isomorphic to $G \cup u'v'$, the graphs $sp(G \cup uv)$ and $sp(G \cup u'v')$ are nonisomorphic split graphs with equal chromatic symmetric functions.  One way to generate such examples easily is by taking an arbitrary noncomplete connected graph $G$, and choosing any nonedge $ab$ in $G$.  Then we construct $2G$ as the disjoint union of graphs $G$ and $G^*$, where $G^*$ is isomorphic to $G$ (that is, $V(2G) = V(G) \sqcup V(G^*)$, and $E(2G) = E(G) \sqcup E(G^*)$).  Let $f: V(G) \rightarrow V(G^*)$ be an isomorphism of $G$ and $G^*$.  In the statement of Lemma \ref{Lem:Split}, let $u = u' = a$ and $v = b$ be vertices of the component $G$, and $v' = f(b)$ a vertex of the component $G^*$.  Then it is simple to verify that these choices for $u,u',v,v'$ satisfy the lemma, and that the two graphs $2G \cup uv$ and $2G \cup u'v'$ are nonisomorphic, since the latter is connected, and the former is not.

We can also use Lemma \ref{Lem:Split} to produce two nonisomorphic graphs, both connected, such that their split graphs have equal chromatic symmetric functions, as shown in Figure \ref{Fig:spConnected}.  Note that in this figure $G \cup uv$ is not isomorphic to $G \cup u'v'$ because, for example, $G \cup uv$ contains a triangle ($K_3$), and $G \cup u'v'$ does not.

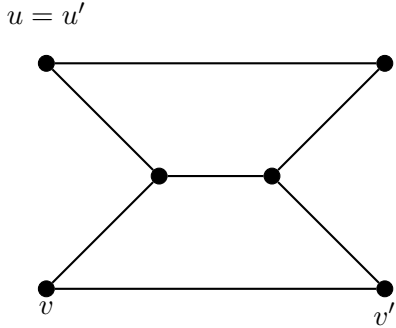
\begin{figure}[!htb]
\begin{center}
\begin{tikzpicture}[scale=1.5]
    
\node[label=above:{$u = u'$}, fill=black, circle] at (0, 2)(v1){};
\node[fill=black, circle] at (3, 2)(v2){};
\node[fill=black, circle] at (1, 1)(v3){};
\node[fill=black, circle] at (2, 1)(v4){};
\node[label=below:{$v$}, fill=black, circle] at (0, 0)(v5){};
\node[label=below:{$v'$}, fill=black, circle] at (3, 0)(v6){};

\draw[black, thick] (v1) -- (v2);
\draw[black, thick] (v1) -- (v3);
\draw[black, thick] (v2) -- (v4);
\draw[black, thick] (v3) -- (v4);
\draw[black, thick] (v3) -- (v5);
\draw[black, thick] (v4) -- (v6);
\draw[black, thick] (v5) -- (v6);

\end{tikzpicture}
\end{center}
\caption{An unweighted connected graph $G$ such that $X_{sp(G \cup uv)} = X_{sp(G \cup u'v')}$.}
\label{Fig:spConnected}
\end{figure}

However, Lemma \ref{Lem:Split} can not generalize directly to $XB$ because $\nmid$ does not admit a simple deletion-contraction relation on $XB$.  If we instead use normal contraction $/$ on the edge $ux$, we get a multi-edge between $u$ and $v$, and likewise for $u'$ and $v'$.  Thus, to generalize Lemma \ref{Lem:Split} we would need a single automorphism of $G$ that takes $u$ to $u'$ and $v$ to $v'$ simultaneously; but then clearly $G \cup uv$ and $G \cup u'v'$ would be isomorphic!
\end{subsection}

\begin{subsection}{Further Constructions of Graphs with Equal $X_G$}

  In much of the recent literature on the chromatic symmetric function, examples of pairs of graphs with equal chromatic symmetric function have been generated using a result of Orellana and Scott.  We reiterate it here and also prove that it extends to vertex-weighted graphs:

\begin{theorem}[\cite{ore}, Theorem 4.2]\label{thm:ore}

  Let $(G,w)$ be a simple, vertex-weighted graph with distinct vertices $v_1,v_2,v_3,v_4$ such that
  \begin{itemize}
  \item $v_1v_2, v_2v_3, v_3v_4 \in E(G)$, and $v_1v_3,v_1v_4,v_2v_4 \notin E(G)$.
  \item There is a $w$-automorphism $f$ of $G \bk v_2v_3$ such that $f(\{v_1,v_3\}) = \{v_2,v_4\}$ and $f(\{v_2,v_4\}) = \{v_1,v_3\}$.
  \end{itemize}
  Then the graphs $G \cup v_1v_3$ and $G \cup v_2v_4$ have equal chromatic symmetric function.

\end{theorem}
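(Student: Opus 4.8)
The plan is to apply Lemma \ref{lem:eqx} to the simple vertex-weighted graphs $G_1 := (G \cup v_1v_3, w)$ and $G_2 := (G \cup v_2v_4, w)$, using the newly added edges $e_1 := v_1v_3$ and $e_2 := v_2v_4$. On the deletion side this is immediate, since $(G_1 \bk e_1, w) = (G,w) = (G_2 \bk e_2, w)$ as vertex-weighted graphs. By Lemma \ref{lem:eqx} together with the remark following it, it therefore suffices to exhibit a $w$-isomorphism between the contracted graphs $A := (G\cup v_1v_3) \nmid v_1v_3$ (with its merged vertex $z$ of weight $w(v_1)+w(v_3)$) and $B := (G\cup v_2v_4)\nmid v_2v_4$ (with its merged vertex $z'$ of weight $w(v_2)+w(v_4)$).

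Write $G' := G \bk v_2v_3$, on which $f$ is a $w$-automorphism. First I would observe that $f(\{v_1,v_3\}) = \{v_2,v_4\}$ and $f(\{v_2,v_4\}) = \{v_1,v_3\}$ force $f$ to permute $\{v_1,v_2,v_3,v_4\}$, hence also to permute $V(G) \setminus \{v_1,v_2,v_3,v_4\}$; since $f$ preserves $w$ this yields $w(v_1)+w(v_3) = w(v_2)+w(v_4)$, so $z$ and $z'$ carry equal weight. I then define $\phi : V(A) \to V(B)$ by $\phi(z) = z'$ and $\phi(u) = f(u)$ for $u \in V(A) \setminus \{z\} = V(G)\setminus\{v_1,v_3\}$. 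Because $f$ maps $\{v_2,v_4\}$ onto $\{v_1,v_3\}$ and fixes $V(G)\setminus\{v_1,v_2,v_3,v_4\}$ setwise, $\phi$ is a bijection onto $V(B)$, and it is weight-preserving by the previous line.

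It remains to check that $\phi$ is a graph isomorphism. I would first record $E(A)$ and $E(B)$ explicitly: since $G$ is simple and $v_1v_3, v_1v_4, v_2v_4 \notin E(G)$, contracting $v_1v_3$ creates no loops, $z$ becomes adjacent exactly to $N_G(v_1)\cup N_G(v_3)$ (the edges $v_1v_2$ and $v_2v_3$ merging into the single edge $zv_2$), and every edge of $G$ with both endpoints outside $\{v_1,v_3\}$ survives unchanged; symmetrically for $B$, with $z'$ adjacent to $N_G(v_2)\cup N_G(v_4)$ (the edges $v_2v_3$ and $v_3v_4$ merging into $z'v_3$). Then, splitting an edge of $A$ according to whether it is incident to $z$, incident to $v_2$ or $v_4$ but not $z$, or has both endpoints outside $\{v_1,v_2,v_3,v_4\}$, I would verify in each case that $\phi$ sends it to an edge of $B$; each verification reduces to the principle ``if $ab \in E(G)$ with $ab \neq v_2v_3$ then $ab \in E(G')$, hence $f(a)f(b) \in E(G') \subseteq E(G)$''. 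Running the same argument with $f^{-1}$ in place of $f$ (it is also a $w$-automorphism of $G'$ swapping $\{v_1,v_3\}$ and $\{v_2,v_4\}$) shows $\phi^{-1}$ maps edges of $B$ to edges of $A$, so $\phi$ is a graph isomorphism, hence a $w$-isomorphism, completing the proof.

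The main obstacle I anticipate is bookkeeping rather than any genuine difficulty: because $f$ is an automorphism of $G \bk v_2v_3$ and not of $G$ itself, one must ensure that every use of ``$f$ preserves edges'' is applied to an edge different from $v_2v_3$. Concretely, when tracing the edge $z'v_3$ of $B$ back to $A$ one must route through the edge $v_3v_4$ rather than $v_2v_3$, and it is precisely here that the full hypothesis --- the path $v_1v_2, v_2v_3, v_3v_4$ together with the non-edges $v_1v_3, v_1v_4, v_2v_4$ --- is used. The case analysis is routine but must be carried out carefully so that no edge (in particular the merged edges $zv_2$ and $z'v_3$) is double-counted or missed.
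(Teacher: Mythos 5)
Your proposal is correct and follows essentially the same route as the paper: apply Lemma~\ref{lem:eqx} to the added edges $v_1v_3$ and $v_2v_4$, note that both deletions return $(G,w)$, and show the simple contractions are $w$-isomorphic via the hypothesized automorphism $f$ of $G \bk v_2v_3$ extended by sending the merged vertex to the merged vertex. The only difference is that you carry out explicitly the neighborhood/edge bookkeeping (including the routing through $v_1v_2$ and $v_3v_4$ rather than $v_2v_3$) that the paper disposes of by inspecting Figure~\ref{Fig:Ore}.
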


\begin{proof}
Let $G_1 = G \cup v_1v_3$ and let $G_2 = G \cup v_2v_4$.  By applying Lemma \ref{lem:eqx} to edges $v_1v_3$ of $G_1$ and $v_2v_4$ of $G_2$, it suffices to show that $G_1 \nmid v_1v_3$ is $w$-isomorphic to $G_2 \nmid v_2v_4$.

The portions of these graphs induced by $v_1,v_2,v_3,v_4$ and their contractions are illustrated in Figure \ref{Fig:Ore}.  It is clear from this figure that the automorphism $f$ given by hypothesis induces a $w$-isomorphism of $G_1 \nmid v_1v_3$ and $G_2 \nmid v_2v_4$, so we are done.

\end{proof}

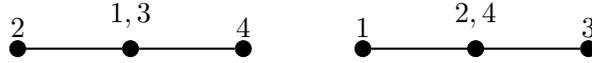
\begin{figure}[!htb]
\begin{center}
\begin{tikzpicture}[scale=1.5]

\node[label=above:{$2$}, fill=black, circle] at (0, 0)(v1){};
\node[label=above:{$1,3$}, fill=black, circle] at (1, 0)(v2){};
\node[label=above:{$4$}, fill=black, circle] at (2, 0)(v3){};

\draw[black, thick] (v1) -- (v2);
\draw[black, thick] (v2) -- (v3);
\end{tikzpicture}
\hspace{1cm}
\begin{tikzpicture}[scale=1.5]

\node[label=above:{$1$}, fill=black, circle] at (0, 0)(v1){};
\node[label=above:{$2,4$}, fill=black, circle] at (1, 0)(v2){};
\node[label=above:{$3$}, fill=black, circle] at (2, 0)(v3){};

\draw[black, thick] (v1) -- (v2);
\draw[black, thick] (v2) -- (v3);

\end{tikzpicture}
\end{center}
\caption{The portions of $G_1 \nmid v_1v_3$ and $G_2 \nmid v_2v_4$ induced by $v_1,v_2,v_3,v_4$.}
\label{Fig:Ore}
\end{figure}

In addition to Lemma \ref{Lem:Split} and Theorem \ref{thm:ore}, we present one more method for constructing graphs with equal chromatic symmetric function.  This method is inspired by the case $u = u'$ of Lemma \ref{Lem:Split}, but can be used in slightly more general contexts and is more akin to Theorem \ref{thm:ore}.

Given a simple graph $G$ and a vertex $v \in V(G)$, we define the \emph{neighborhood of} $v$ to be $N(v) = \{u \in V(G): uv \in E(G)\}$ (note that $v \notin N(v)$).

\begin{lemma}\label{lem:eqX}

  Let $(G,w)$ be a simple vertex-weighted graph, and let $v_1,v_2,v_3$ be distinct vertices of $G$ satisfying

  \begin{itemize}
    \item $v_1v_2 \in E(G)$, and $v_1v_3, v_2v_3 \notin E(G)$.
    \item $N(v_3) \subseteq N(v_1) \cap N(v_2)$.
    \item There is a $w$-automorphism $f$ of $G \bk v_3$ such that $f(v_1) = v_2$ and $f(v_2) = v_1$.
  \end{itemize}

  Then the graphs $G \cup v_1v_3$ and $G \cup v_2v_3$ have equal chromatic symmetric functions.

\end{lemma}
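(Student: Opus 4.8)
The plan is to apply Lemma \ref{lem:eqx} to the edge $v_1v_3$ of $G_1 := G \cup v_1v_3$ and the edge $v_2v_3$ of $G_2 := G \cup v_2v_3$, suppressing the weight function $w$ from the notation as is done elsewhere in this section. The deletion halves are immediate: $G_1 \bk v_1v_3 = G = G_2 \bk v_2v_3$ as vertex-weighted graphs, so their chromatic symmetric functions agree. Thus everything reduces to proving that $G_1 \nmid v_1v_3$ and $G_2 \nmid v_2v_3$ are $w$-isomorphic.

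First I would identify $G_1 \nmid v_1v_3$ explicitly. Contracting $v_1v_3$ merges $v_1$ and $v_3$ into a vertex $z$ of weight $w(v_1)+w(v_3)$, whose neighbourhood before the multi-edge collapse is $\bigl(N_{G_1}(v_1) \cup N_{G_1}(v_3)\bigr) \bk \{v_1,v_3\}$. Since $v_1v_3 \notin E(G)$, we have $N_{G_1}(v_1) = N_G(v_1) \cup \{v_3\}$ and $N_{G_1}(v_3) = N_G(v_3) \cup \{v_1\}$, and the hypothesis $N(v_3) \subseteq N(v_1)$ gives $\bigl(N_{G_1}(v_1) \cup N_{G_1}(v_3)\bigr) \bk \{v_1,v_3\} = N_G(v_1)$. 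Hence, after the multi-edge collapse required by simple contraction (no loop is created, since $v_3 \notin N_G(v_1)$), $G_1 \nmid v_1v_3$ is exactly the graph obtained from $G \bk v_3$ by relabelling $v_1$ as $z$ and setting its weight to $w(v_1)+w(v_3)$. By the symmetric computation using $N(v_3) \subseteq N(v_2)$, the graph $G_2 \nmid v_2v_3$ is the graph obtained from $G \bk v_3$ by relabelling $v_2$ as $z'$ and setting its weight to $w(v_2)+w(v_3)$.

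It then remains to check that the $w$-automorphism $f$ of $G \bk v_3$ with $f(v_1)=v_2$ and $f(v_2)=v_1$ carries the first modified graph onto the second. Because $f$ is a $w$-automorphism, $w(v_1)=w(v_2)$, so $z$ and $z'$ carry the same weight $w(v_1)+w(v_3)=w(v_2)+w(v_3)$; since $f$ sends the unique reweighted vertex $v_1$ of $G_1 \nmid v_1v_3$ to the unique reweighted vertex $v_2$ of $G_2 \nmid v_2v_3$ and is an adjacency- and weight-preserving bijection on all other vertices, $f$ (read as sending $z \mapsto z'$) is a $w$-isomorphism of $G_1 \nmid v_1v_3$ with $G_2 \nmid v_2v_3$. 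Lemma \ref{lem:eqx} then completes the proof.

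I expect the only real content to be the neighbourhood bookkeeping in the second paragraph: the condition $N(v_3) \subseteq N(v_1) \cap N(v_2)$ is precisely what guarantees that the simple contraction of $v_iv_3$ introduces no neighbour of $z$ beyond $N_G(v_i)$, so that both contractions coincide with the same reweighting of $G \bk v_3$. Once this is pinned down, the remaining step is the routine observation that $f$ is already a $w$-automorphism of $G \bk v_3$ preserving the relevant weights, and that $w(v_1)=w(v_2)$ keeps the reweightings compatible.
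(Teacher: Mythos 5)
Your proposal is correct and follows essentially the same route as the paper: apply Lemma \ref{lem:eqx} to the added edges (the deletions both give $G$), and use $N(v_3)\subseteq N(v_1)\cap N(v_2)$ to see that each simple contraction is just $G\bk v_3$ with $v_1$ (resp.\ $v_2$) reweighted, so the automorphism $f$ swapping $v_1$ and $v_2$ yields the required $w$-isomorphism. The paper phrases this by defining the map on the contracted graphs directly and checking the neighborhoods of the contracted vertices, but the content is identical to your neighbourhood bookkeeping.
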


\begin{proof}

  We let $e_1 = v_1v_3$ and $e_2 = v_2v_3$ be nonedges of $G$. By applying Lemma \ref{lem:eqx} to edge $e_1$ of $G \cup e_1$ and to edge $e_2$ of $G \cup e_2$, it suffices to show that $(G \nmid e_1, w / e_1)$ and $(G \nmid e_2, w / e_2)$ are $w$-isomorphic (and from now on, we suppress mention of the weight functions).
  
  In what follows, we let $u_1$ be the contracted vertex (of weight $2$) in $G \nmid e_1$, and we let $u_2$ be the contracted vertex (of weight $2$) in $G \nmid e_2$.  Furthermore, $G \nmid e_1$ contains $v_2$ but not $v_1$, and $G \nmid e_2$ contains $v_1$ but not $v_2$, and otherwise these graphs have the same vertex set, all of weight $1$ except for $u_1$ or $u_2$.
  
  Let $f$ be the $w$-automorphism of $G \bk v_3$ that swaps $v_1$ and $v_2$.  We define the map $g: V(G \nmid e_1) \rightarrow V(G \nmid e_2)$ by $g(v) = f(v)$ if $v \neq u_1,v_2$, $g(u_1) = u_2$, and $g(v_2) = v_1$.  Clearly this $g$ is a $w$-isomorphism if it is an isomorphism.  All edge and nonedge relations between vertices of $G \nmid e_1$ other than $u_1$ and $v_2$ are preserved in $G \nmid e_2$ by $g$ since they were preserved by $f$, so it suffices to look at edges and nonedges involving $u_1$ and $v_2$.

 Let $G_1 = G \nmid e_1$ and $G_2 = G \nmid e_2$.  Using the definition of contraction and the hypotheses we have
 $$
 N_{G_1}(u_1) = N_G(v_1) \cup N_G(v_3) = N_G(v_1),
 $$
 and
 $$
 N_{G_2}(u_2) = N_G(v_2) \cup N_G(v_3) = N_G(v_2) = g(N_G(v_1))
 $$
 so the neighborhood of $u_1$ is mapped to the neighborhood of $u_2$ by $g$.  Additionally,
 $$
 g(N_{G_1}(v_2)) = g(N_G(v_2)) = N_G(v_1) = N_{G_2}(v_1) 
 $$
 so the neighborhood of $v_2$ is mapped to the neighborhood of $v_1$, and this concludes the proof.

\end{proof}

\end{subsection}

\begin{subsection}{Graphs with Equal $XB$}

Relative to $X_G$, only a few examples of nonisomorphic graphs with equal Tutte symmetric functions are known.  An example with a minimum number of vertices and edges is given by Markstrom \cite{ising}\footnote{The example of \cite{ising} is also Example 259 of the authors' list of pairs of graphs with equal chromatic symmetric function \cite{data}.}.  Additionally, Brylawski \cite{brylawski} uses the rotor-like graph given in Figure \ref{fig:brylawksi} to construct a family of graph pairs with arbitrarily high connectivity and equal Tutte symmetric function\footnote{This construction depends in part on the fact that the Tutte symmetric function of a simple graph $G$ uniquely determines that of $G$'s complement.  This fact is not obvious from the definition of $XB$, but is clear from the definition of the equivalent polychromate.}.

We modify Brylawski's result to construct graph pairs with arbitrarily high girth and equal Tutte symmetric function.

\begin{theorem}\label{thm:girthXB}
Let $k > 2$ be a positive integer, and let $(G,w)$ be a (not necessarily simple) vertex-weighted graph with distinct vertices $a,b,$ and $c$ such that there exists a $w$-automorphism $f$ of $(G,w)$ satisfying $f(a) = b, f(b) = c,$ and $f(c) = a$.  Modify the graph $(G,w)$ by replacing every edge with an unweighted path of length $k$ $($that is, for each edge $u_1u_2 = e \in E(G)$, delete $e$, add weight-$1$ vertices $e_1, \dots, e_{k-1}$ to $G$, and add edges $u_1e_1, e_1e_2, \dots, e_{k-1}u_2$ to $G)$.  Then construct the vertex-weighted graphs $(G_1,w)$ and $(G_2,w)$ by retaining the weight function $w$ and setting 
\begin{itemize}
    \item $V(G_1) = V(G_2) = V(G) \cup \{v_1, v_2, \dots, v_{k-1}, x_1, x_2, \dots, x_k\}$ $($where these vertices all have weight $1)$,
    \item $E(G_1) = E(G) \cup \{av_1, v_1v_2, \dots, v_{k-1}b, bx_1, x_1x_2, \dots, x_kc\}$,
    \item $E(G_2) = E(G) \cup \{av_{k-1}, v_{k-1}v_{k-2}, \dots, v_1c, cx_1, x_1x_2, \dots, x_kb\}$.
\end{itemize} 
Then 
$$
XB_{(G_1,w)} = XB_{(G_2,w)}.
$$
\end{theorem}

\begin{proof}

Note that the modified version of $G$ still admits $f$ as a $w$-automorphism with $f(a) = b, f(b) = c,$ and $f(c) = a$ by extending its definition to the newly formed vertices.
We apply Lemma \ref{lem:eqx} to the edges $bx_1$ of $(G_1,w)$ and $cx_1$ of $(G_2,w)$.  We may check that the graph $(G_1 \bk bx_1, w)$ is $w$-isomorphic to $(G_2 \bk cx_1, w)$ by extending $f$ to a map $f: V(G_1) \rightarrow V(G_2)$ and defining $f(v_i) = v_i$ and $f(x_i) = x_i$ for all appropriate $i$, and verifying that $f^2$ is the desired $w$-isomorphism.  Likewise, if we instead define $f(v_i) = x_{k-i+1}$ and $f(x_i) = v_{i-1}$, we may verify that $f$ is a $w$-isomorphism from $(G_1 / bx_1, w / bx_1)$ to $(G_2 / cx_1, w / cx_1)$ upon also letting $f$ map the vertex formed by contraction in $G_1$ to the vertex formed by contraction in $G_2$, and this concludes the proof.  

\end{proof}

Thus, we may use Theorem \ref{thm:girthXB} to construct pairs of (unweighted) graphs of arbitrarily high girth with equal Tutte symmetric function given a single graph $G$ such that the resulting graphs $G_1$ and $G_2$ are always nonisomorphic.  It is straightforward to verify that the graph of Brylawski as given in Figure \ref{fig:brylawksi} is one such example.   This construction is particularly noteworthy since there has not previously been shown even triangle-free graphs with the same chromatic symmetric function!

\begin{figure}[!htb]
\begin{center}
\begin{tikzpicture}[scale=1.5]

  \node[label=left:{a}, fill=black, circle] at (0, 0)(a){};
  \node[label=above:{b}, fill=black, circle] at (2, 3.46)(b){};
  \node[label=right:{c}, fill=black, circle] at (4, 0)(c){};
  \node[fill=black, circle] at (2, 1.15)(cen){};
  \node[fill=black, circle] at (1.33, 0.77)(aa){};
  \node[fill=black, circle] at (2.67, 0.77)(cc){};
  \node[fill=black, circle] at (2, 1.91)(bb){};
  \node[fill=black, circle] at (1.33, 1.53)(ab){};
  \node[fill=black, circle] at (2, 0.39)(ac){};
  \node[fill=black, circle] at (2.67, 1.53)(bc){};

  \draw[black, thick] (a) -- (aa);
  \draw[black, thick] (b) -- (bb);
  \draw[black, thick] (c) -- (cc);
  \draw[black, thick] (cen) -- (aa);
  \draw[black, thick] (cen) -- (bb);
  \draw[black, thick] (cen) -- (cc);
  \draw[black, thick] (ab) -- (bb);
  \draw[black, thick] (bc) -- (cc);
  \draw[black, thick] (ac) -- (aa);
  \draw[black, thick] (ab) -- (a);
  \draw[black, thick] (bc) -- (b);
  \draw[black, thick] (ac) -- (a);
  \draw[black, thick] (ab) -- (b);
  \draw[black, thick] (bc) -- (c);
  \draw[black, thick] (ac) -- (c);
  
\end{tikzpicture}
\end{center}
\caption{A graph $G$ giving rise to nonisomorphic graphs $G_1$ and $G_2$ with the same Tutte symmetric functions and arbitrarily large girth.}
\label{fig:brylawksi}
\end{figure}
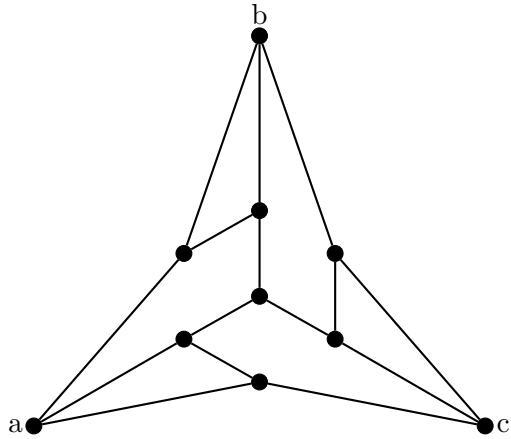

We also give two pairs of small graphs with equal Tutte symmetric functions that do not fit the requirements of Theorem \ref{thm:girthXB}.  In the figures that follow, the numbers next to the vertices are labels rather than weights, so that graphs can be redrawn to illustrate isomorphisms.  Vertex weights from an edge contraction will be denoted by simply listing each original vertex label that corresponds to a vertex formed by edge contraction.

First, we consider the graphs shown in Figure \ref{fig:graphEqual}.

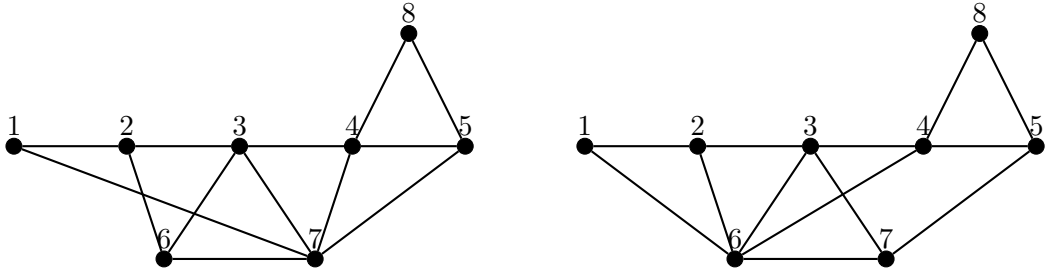
\begin{figure}[!htb]
\begin{center}
\begin{tikzpicture}[scale=1.5]

  \node[label=above:{1}, fill=black, circle] at (0, 1)(1){};
  \node[label=above:{2}, fill=black, circle] at (1, 1)(2){};
  \node[label=above:{3}, fill=black, circle] at (2, 1)(3){};
  \node[label=above:{4}, fill=black, circle] at (3, 1)(4){};
  \node[label=above:{5}, fill=black, circle] at (4, 1)(5){};
  \node[label=above:{6}, fill=black, circle] at (1.33, 0)(6){};
  \node[label=above:{7}, fill=black, circle] at (2.67, 0)(7){};
  \node[label=above:{8}, fill=black, circle] at (3.5, 2)(8){};

  \draw[black, thick] (1) -- (2);
  \draw[black, thick] (2) -- (3);
  \draw[black, thick] (3) -- (4);
  \draw[black, thick] (4) -- (5);
  \draw[black, thick] (4) -- (8);
  \draw[black, thick] (5) -- (8);
  \draw[black, thick] (6) -- (2);
  \draw[black, thick] (6) -- (3);
  \draw[black, thick] (7) -- (1);
  \draw[black, thick] (7) -- (3);
  \draw[black, thick] (7) -- (4);
  \draw[black, thick] (7) -- (5);
  \draw[black, thick] (7) -- (6);

\end{tikzpicture}
\hspace{1cm}
\begin{tikzpicture}[scale=1.5]

  \node[label=above:{1}, fill=black, circle] at (0, 1)(1){};
  \node[label=above:{2}, fill=black, circle] at (1, 1)(2){};
  \node[label=above:{3}, fill=black, circle] at (2, 1)(3){};
  \node[label=above:{4}, fill=black, circle] at (3, 1)(4){};
  \node[label=above:{5}, fill=black, circle] at (4, 1)(5){};
  \node[label=above:{6}, fill=black, circle] at (1.33, 0)(6){};
  \node[label=above:{7}, fill=black, circle] at (2.67, 0)(7){};
  \node[label=above:{8}, fill=black, circle] at (3.5, 2)(8){};

  \draw[black, thick] (1) -- (2);
  \draw[black, thick] (2) -- (3);
  \draw[black, thick] (3) -- (4);
  \draw[black, thick] (4) -- (5);
  \draw[black, thick] (4) -- (8);
  \draw[black, thick] (5) -- (8);
  \draw[black, thick] (6) -- (1);
  \draw[black, thick] (6) -- (2);
  \draw[black, thick] (6) -- (3);
  \draw[black, thick] (6) -- (4);
  \draw[black, thick] (7) -- (3);
  \draw[black, thick] (7) -- (5);
  \draw[black, thick] (7) -- (6);

\end{tikzpicture}
\end{center}
\caption{Graphs $G_1$ and $G_2$ with equal $XB$.}
\label{fig:graphEqual}
\end{figure}

Let the graph on the left be called $G_1$, and the graph on the right $G_2$.  First, note the graphs are indeed nonisomorphic, since for example $G_1$ has the vertex $1$ with degree two that is not part of a triangle, but in $G_2$ both vertices of degree two are in triangles.

To show that these graphs have the same Tutte symmetric function, we apply Lemma \ref{lem:eqx} to the edge $(6,7)$ in both graphs, and reduce to showing that the edge-deleted graphs are $w$-isomorphic, and the edge-contracted graphs are $w$-isomorphic.  It is easy to verify that both edge-contracted graphs are isomorphic to the graph in Figure \ref{fig:graphCon}.

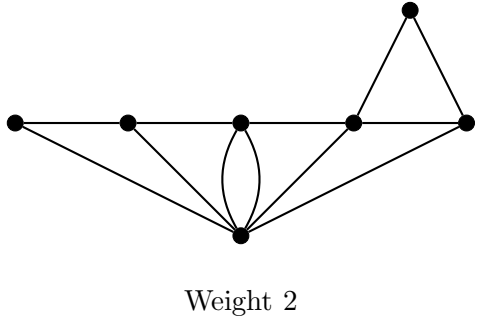
\begin{figure}[!htb]
\begin{center}
\begin{tikzpicture}[scale=1.5]
  \node[fill=black, circle] at (0, 1)(1){};
  \node[fill=black, circle] at (1, 1)(2){};
  \node[fill=black, circle] at (2, 1)(3){};
  \node[fill=black, circle] at (3, 1)(4){};
  \node[fill=black, circle] at (4, 1)(5){};
  \node[label=below:{Weight 2}, fill=black, circle] at (2, 0)(6){};
  \node[fill=black, circle] at (3.5, 2)(8){};

  \draw[black, thick] (1) -- (2);
  \draw[black, thick] (2) -- (3);
  \draw[black, thick] (3) -- (4);
  \draw[black, thick] (4) -- (5);
  \draw[black, thick] (4) -- (8);
  \draw[black, thick] (5) -- (8);
  \draw[black, thick] (6) -- (1);
  \draw[black, thick] (6) -- (2);
  \draw[black, thick] (6) edge [bend left=30] (3);
  \draw[black, thick] (6) -- (4);
  \draw[black, thick] (6) edge [bend right=30] (3);
  \draw[black, thick] (6) -- (5);
\end{tikzpicture}
\end{center}
\caption{The graph formed by contracting $(6,7)$ in $G_1$ or $G_2$.}
\label{fig:graphCon}
\end{figure}

To see that the edge-deleted graphs are isomorphic, take the first graph, delete the edge $(6,7)$ and then rearrange the vertices as in Figure \ref{fig:graphDel}.

\begin{figure}[!htb]
\begin{center}
  \begin{tikzpicture}[scale=1.5]

  \node[label=above:{1}, fill=black, circle] at (2.67, 0)(1){};
  \node[label=above:{2}, fill=black, circle] at (4, 1)(2){};
  \node[label=above:{3}, fill=black, circle] at (3, 1)(3){};
  \node[label=above:{4}, fill=black, circle] at (1.33, 0)(4){};
  \node[label=above:{5}, fill=black, circle] at (1, 1)(5){};
  \node[label=above:{6}, fill=black, circle] at (3.5, 2)(6){};
  \node[label=above:{7}, fill=black, circle] at (2, 1)(7){};
  \node[label=above:{8}, fill=black, circle] at (0, 1)(8){};

  \draw[black, thick] (1) -- (2);
  \draw[black, thick] (2) -- (3);
  \draw[black, thick] (3) -- (4);
  \draw[black, thick] (4) -- (5);
  \draw[black, thick] (4) -- (8);
  \draw[black, thick] (5) -- (8);
  \draw[black, thick] (6) -- (2);
  \draw[black, thick] (6) -- (3);
  \draw[black, thick] (7) -- (1);
  \draw[black, thick] (7) -- (4);
  \draw[black, thick] (7) -- (3);
  \draw[black, thick] (7) -- (5);

  \end{tikzpicture}
\end{center}
\caption{$G_1$ with the edge $(6,7)$ deleted.}
\label{fig:graphDel}
\end{figure}
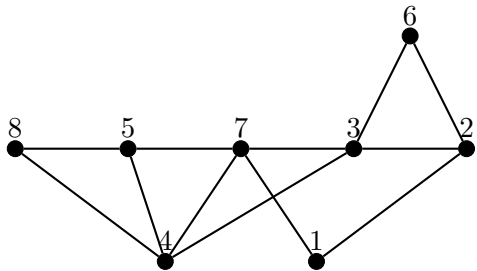

Using this figure, it is easy to see that $G_1 \bk (6,7)$ is $w$-isomorphic to $G_2 \bk (6,7)$, and this shows that the graphs $G_1$ and $G_2$ have equal Tutte symmetric functions.

For a second example, we consider the graphs in Figure \ref{fig:graph2} (with the edges highlighted in red that we will apply Lemma \ref{lem:eqx}).  Let the graph on the top of this figure be $H_1$, and the graph on the bottom be $H_2$.

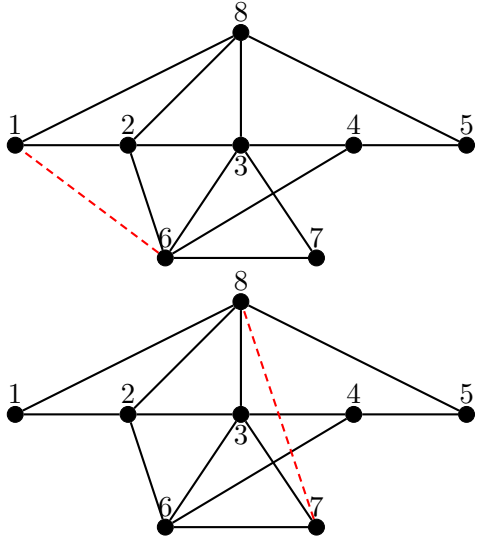
\begin{figure}[!htb]
\begin{center}
  \begin{tikzpicture}[scale=1.5]

  \node[label=above:{1}, fill=black, circle] at (0, 1)(1){};
  \node[label=above:{2}, fill=black, circle] at (1, 1)(2){};
  \node[label=below:{3}, fill=black, circle] at (2, 1)(3){};
  \node[label=above:{4}, fill=black, circle] at (3, 1)(4){};
  \node[label=above:{5}, fill=black, circle] at (4, 1)(5){};
  \node[label=above:{6}, fill=black, circle] at (1.33, 0)(6){};
  \node[label=above:{7}, fill=black, circle] at (2.67, 0)(7){};
  \node[label=above:{8}, fill=black, circle] at (2, 2)(8){};

  \draw[black, thick] (1) -- (2);
  \draw[black, thick] (2) -- (3);
  \draw[black, thick] (3) -- (4);
  \draw[black, thick] (4) -- (5);
  \draw[black, thick] (8) -- (2);
  \draw[black, thick] (8) -- (3);
  \draw[black, thick] (8) -- (1);
  \draw[black, thick] (8) -- (5);
  \draw[black, thick] (6) -- (2);
  \draw[black, thick] (6) -- (3);
  \draw[red, densely dashed, thick] (6) -- (1);
  \draw[black, thick] (6) -- (4);
  \draw[black, thick] (7) -- (6);
  \draw[black, thick] (7) -- (3);

  \end{tikzpicture}

  \begin{tikzpicture}[scale=1.5]

  \node[label=above:{1}, fill=black, circle] at (0, 1)(1){};
  \node[label=above:{2}, fill=black, circle] at (1, 1)(2){};
  \node[label=below:{3}, fill=black, circle] at (2, 1)(3){};
  \node[label=above:{4}, fill=black, circle] at (3, 1)(4){};
  \node[label=above:{5}, fill=black, circle] at (4, 1)(5){};
  \node[label=above:{6}, fill=black, circle] at (1.33, 0)(6){};
  \node[label=above:{7}, fill=black, circle] at (2.67, 0)(7){};
  \node[label=above:{8}, fill=black, circle] at (2, 2)(8){};

  \draw[black, thick] (1) -- (2);
  \draw[black, thick] (2) -- (3);
  \draw[black, thick] (3) -- (4);
  \draw[black, thick] (4) -- (5);
  \draw[black, thick] (8) -- (2);
  \draw[black, thick] (8) -- (3);
  \draw[black, thick] (8) -- (1);
  \draw[black, thick] (8) -- (5);
  \draw[black, thick] (6) -- (2);
  \draw[black, thick] (6) -- (3);
  \draw[black, thick] (6) -- (4);
  \draw[black, thick] (7) -- (6);
  \draw[black, thick] (7) -- (3);
  \draw[red, densely dashed, thick] (7) -- (8);

  \end{tikzpicture}
\end{center}
\caption{Graphs $H_1$ and $H_2$ with nonequal $XB$.}
\label{fig:graph2}
\end{figure}

 First, we verify that $H_1$ and $H_2$ are nonisomorphic.  Both graphs have exactly two vertices of degree $2$, namely vertices $5$ and $7$ in $H_1$ and vertices $5$ and $1$ in $H_2$.  However, in $H_2$ these two vertices have a common neighbor, as they are both adjacent to $8$, but the corresponding vertices do not have a common neighbor in $H_1$.

 To show that $H_1$ and $H_2$ nonetheless have equal Tutte symmetric function, we apply Lemma \ref{lem:eqx} to the edges marked in red in Figure \ref{fig:graph2}.  Clearly the graphs with these edges deleted are isomorphic.  The contracted graphs are shown in Figure \ref{fig:graph2d}.

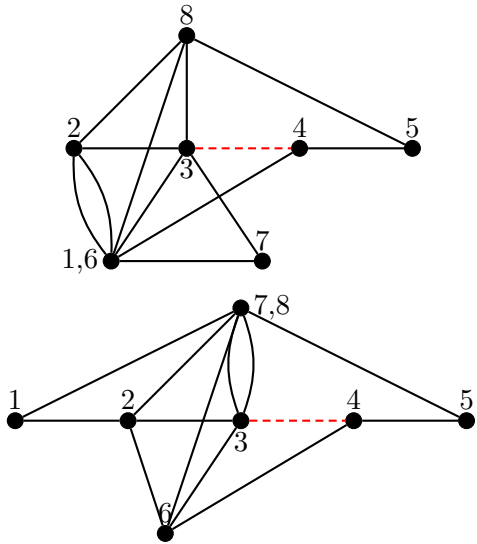
\begin{figure}[!htb]
\begin{center}
  \begin{tikzpicture}[scale=1.5]

  \node[label=above:{2}, fill=black, circle] at (1, 1)(2){};
  \node[label=below:{3}, fill=black, circle] at (2, 1)(3){};
  \node[label=above:{4}, fill=black, circle] at (3, 1)(4){};
  \node[label=above:{5}, fill=black, circle] at (4, 1)(5){};
  \node[label=left:{1,6}, fill=black, circle] at (1.33, 0)(6){};
  \node[label=above:{7}, fill=black, circle] at (2.67, 0)(7){};
  \node[label=above:{8}, fill=black, circle] at (2, 2)(8){};

  \draw[black, thick] (6) edge [bend left=20] (2);
  \draw[black, thick] (2) -- (3);
  \draw[red, densely dashed, thick] (3) -- (4);
  \draw[black, thick] (4) -- (5);
  \draw[black, thick] (8) -- (2);
  \draw[black, thick] (8) -- (3);
  \draw[black, thick] (8) -- (6);
  \draw[black, thick] (8) -- (5);
  \draw[black, thick] (6) edge [bend right=20] (2);
  \draw[black, thick] (6) -- (3);
  \draw[black, thick] (6) -- (4);
  \draw[black, thick] (7) -- (6);
  \draw[black, thick] (7) -- (3);

  \end{tikzpicture}

  \begin{tikzpicture}[scale=1.5]

  \node[label=above:{1}, fill=black, circle] at (0, 1)(1){};
  \node[label=above:{2}, fill=black, circle] at (1, 1)(2){};
  \node[label=below:{3}, fill=black, circle] at (2, 1)(3){};
  \node[label=above:{4}, fill=black, circle] at (3, 1)(4){};
  \node[label=above:{5}, fill=black, circle] at (4, 1)(5){};
  \node[label=above:{6}, fill=black, circle] at (1.33, 0)(6){};
  \node[label=right:{7,8}, fill=black, circle] at (2, 2)(8){};

  \draw[black, thick] (1) -- (2);
  \draw[black, thick] (2) -- (3);
  \draw[red, densely dashed, thick] (3) -- (4);
  \draw[black, thick] (4) -- (5);
  \draw[black, thick] (8) -- (2);
  \draw[black, thick] (8) edge [bend left=20] (3);
  \draw[black, thick] (8) -- (1);
  \draw[black, thick] (8) -- (5);
  \draw[black, thick] (6) -- (2);
  \draw[black, thick] (6) -- (3);
  \draw[black, thick] (6) -- (4);
  \draw[black, thick] (8) -- (6);
  \draw[black, thick] (8) edge [bend right=20] (3);

  \end{tikzpicture}
\end{center}
\caption{$H_1$ and $H_2$ after contraction.}
\label{fig:graph2d}
\end{figure}

The graphs in Figure \ref{fig:graph2d} are not $w$-isomorphic, but it suffices to show that they have equal $XB$, which also provides an example of a pair of non-trivially weighted graphs with equal Tutte symmetric function.  To do so, we again apply Lemma \ref{lem:eqx}.  First, in this figure, we rearrange the top graph $H_1 / (1,6)$ into the graph in Figure \ref{fig:graph3d}.

\begin{figure}[!htb]
\begin{center}
  \begin{tikzpicture}[scale=1.5]

\node[label=above:{7}, fill=black, circle] at (0, 1)(1){};
  \node[label=above:{3}, fill=black, circle] at (1, 1)(2){};
  \node[label=above:{2}, fill=black, circle] at (2, 1)(3){};
  \node[label=above:{5}, fill=black, circle] at (3, 1)(4){};
  \node[label=above:{4}, fill=black, circle] at (4, 1)(5){};
  \node[label=above:{8}, fill=black, circle] at (1.33, 0)(6){};
  \node[label=above:{1,6}, fill=black, circle] at (2, 2)(8){};

  \draw[black, thick] (1) -- (2);
  \draw[red, densely dashed, thick] (2) edge [bend right=20] (5);
  \draw[black, thick] (2) -- (3);
  \draw[black, thick] (4) -- (5);
  \draw[black, thick] (8) -- (2);
  \draw[black, thick] (8) edge [bend left=20] (3);
  \draw[black, thick] (8) -- (1);
  \draw[black, thick] (8) -- (5);
  \draw[black, thick] (6) -- (2);
  \draw[black, thick] (6) -- (3);
  \draw[black, thick] (6) -- (4);
  \draw[black, thick] (8) -- (6);
  \draw[black, thick] (8) edge [bend right=20] (3);

  \end{tikzpicture}
\end{center}
\caption{$H_1 / (1,6)$ rearranged.}
\label{fig:graph3d}
\end{figure}
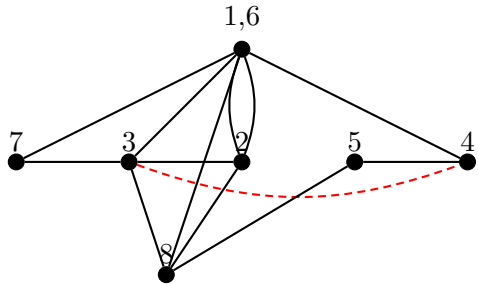

Clearly now the graphs in Figure \ref{fig:graph2d} are $w$-isomorphic with the red edges deleted.  We show that they are also $w$-isomorphic with the red edges contracted.  Those graphs correspond to $H_1$ and $H_2$ with two edges contracted and are shown in Figure \ref{fig:graph4d}.

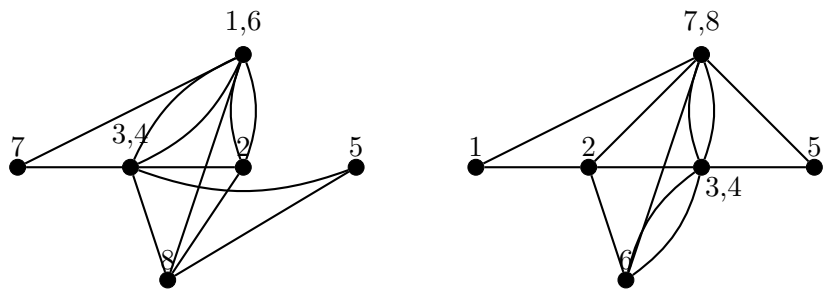
\begin{figure}[!htb]
\begin{center}
  \begin{tikzpicture}[scale=1.5]

  \node[label=above:{7}, fill=black, circle] at (0, 1)(1){};
  \node[label=above:{3,4}, fill=black, circle] at (1, 1)(2){};
  \node[label=above:{2}, fill=black, circle] at (2, 1)(3){};
  \node[label=above:{5}, fill=black, circle] at (3, 1)(4){};
  \node[label=above:{8}, fill=black, circle] at (1.33, 0)(6){};
  \node[label=above:{1,6}, fill=black, circle] at (2, 2)(8){};

  \draw[black, thick] (1) -- (2);
  \draw[black, thick] (2) -- (3);
  \draw[black, thick] (4) edge [bend left = 20] (2);
  \draw[black, thick] (8) edge [bend left = 20] (2);
  \draw[black, thick] (8) edge [bend left = 20] (3);
  \draw[black, thick] (8) -- (1);
  \draw[black, thick] (8) edge [bend right = 20] (2);
  \draw[black, thick] (6) -- (2);
  \draw[black, thick] (6) -- (3);
  \draw[black, thick] (6) -- (4);
  \draw[black, thick] (8) -- (6);
  \draw[black, thick] (8) edge [bend right=20] (3);

  \end{tikzpicture}
  \hspace{1cm}
  \begin{tikzpicture}[scale=1.5]

  \node[label=above:{1}, fill=black, circle] at (0, 1)(1){};
  \node[label=above:{2}, fill=black, circle] at (1, 1)(2){};
  \node[label=below right:{3,4}, fill=black, circle] at (2, 1)(3){};
  \node[label=above:{5}, fill=black, circle] at (3, 1)(5){};
  \node[label=above:{6}, fill=black, circle] at (1.33, 0)(6){};
  \node[label=above:{7,8}, fill=black, circle] at (2, 2)(8){};

  \draw[black, thick] (1) -- (2);
  \draw[black, thick] (2) -- (3);
  \draw[black, thick] (3) -- (5);
  \draw[black, thick] (8) -- (2);
  \draw[black, thick] (8) edge [bend left=20] (3);
  \draw[black, thick] (8) -- (1);
  \draw[black, thick] (8) -- (5);
  \draw[black, thick] (6) -- (2);
  \draw[black, thick] (6) edge [bend left = 20] (3);
  \draw[black, thick] (6) edge [bend right = 20] (3);
  \draw[black, thick] (8) -- (6);
  \draw[black, thick] (8) edge [bend right=20] (3);

  \end{tikzpicture}
\end{center}
\caption{$H_1$ and $H_2$ with two edges contracted.}
\label{fig:graph4d}
\end{figure}

To illustrate that the two graphs in Figure \ref{fig:graph4d} are $w$-isomorphic, we redraw the top graph as shown in Figure \ref{fig:graph5d}, making the $w$-isomorphism apparent.

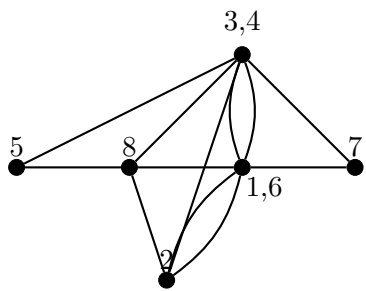
\begin{figure}[!htb]
\begin{center}
  \begin{tikzpicture}[scale=1.5]

  \node[label=above:{5}, fill=black, circle] at (0, 1)(1){};
  \node[label=above:{8}, fill=black, circle] at (1, 1)(2){};
  \node[label=below right:{1,6}, fill=black, circle] at (2, 1)(3){};
  \node[label=above:{7}, fill=black, circle] at (3, 1)(5){};
  \node[label=above:{2}, fill=black, circle] at (1.33, 0)(6){};
  \node[label=above:{3,4}, fill=black, circle] at (2, 2)(8){};

  \draw[black, thick] (1) -- (2);
  \draw[black, thick] (2) -- (3);
  \draw[black, thick] (3) -- (5);
  \draw[black, thick] (8) -- (2);
  \draw[black, thick] (8) edge [bend left=20] (3);
  \draw[black, thick] (8) -- (1);
  \draw[black, thick] (8) -- (5);
  \draw[black, thick] (6) -- (2);
  \draw[black, thick] (6) edge [bend left = 20] (3);
  \draw[black, thick] (6) edge [bend right = 20] (3);
  \draw[black, thick] (8) -- (6);
  \draw[black, thick] (8) edge [bend right=20] (3);

  \end{tikzpicture}
\end{center}
\caption{$H_1$ with two edges contracted rearranged.}
\label{fig:graph5d}
\end{figure}
\end{subsection}

\begin{section}{Further Constructions via the $W$-polynomial}
In \cite{delcon}, the second and third authors noted that vertex-weighted trees are not always distinguished by their chromatic symmetric functions, but the counterexample given in that paper had trees that were isomorphic when unweighted. Other examples of vertex-weighted paths with the same chromatic symmetric function appear in \cite{loebl}. In this section we will develop an algebraic method to construct vertex-weighted graphs with the same $W$-polynomial; by Lemma \ref{lem:Wpoly} this method  also yields vertex-weighted graphs with the same $XB$. The method extends some of the  constructions in \cite{proper} by the first and fourth authors. In an independent work \cite{aliniaeifard2020extended}, Aliniaeifard, Wang and van Willigenburg obtained results similar to the ones presented in this section, but written completely in the language of symmetric functions. 

A \emph{$2$-pointed vertex-weighted graph} is a tuple $(G,w,s,t)$, where $(G,w)$ is a vertex-weighted graph, and $s$ and $t$ are (possibly the same) vertices of $G$. When $w,s,$ and $t$ are clear, we will often just write $G$ in a slight abuse of notation. Given two $2$-pointed vertex-weighted graphs  $(G,w,s,t)$ and $(H,w',s',t')$, define $(G,w,s,t) \cdot (H,w',s',t') = (G\cdot H,w\cdot w',s,t')$, where $G \cdot H$ is the graph formed by taking the disjoint union of $G$ and $H$ and then adding an edge between $t$ and $s'$, and the weight function $w \cdot w'$ assigns to each vertex the weight it had in $G$ or $H$. Second, define $(G,w,s,t) \odot (H,w',s',t')$ as the $2$-pointed vertex-weighted graph $(G \odot H, w \odot w', s, t')$ where $G \odot H = (G \cdot H)/e$ where $e$ is the edge connecting $t$ and $s'$, and $w \odot w' = (w \cdot w')/e$. Finally, if $(G,w,s,t)$ is a $2$-pointed vertex-weighted graph, its \emph{reversal} is the $2$-pointed vertex-weighted graph $(G,w,t,s)$. When simply using $G$ as shorthand to denote $(G,w,s,t)$, we will write $G^*$ to denote its reversal.

An \emph{integer composition} is a sequence $\beta=\beta_1\beta_2\ldots \beta_k$ where each $\beta_i$ is positive. Given a composition $\beta$, its reversal $\beta^*$ is the composition $\beta_k\beta_{k-1}\ldots\beta_1$. Given two compositions $\alpha$ and $\beta$, we say that $\beta\geq \alpha$ if $\beta$ can be obtained by adding consecutive elements of $\alpha$,\emph{e.g.} $4\ 5\geq 1\ 2\ 1\ 3\ 2$. Clearly this forms a partial order on integer compositions.

Given two compositions $\beta = \beta_1 \ldots \beta_k$ and $\beta' = \beta'_1 \ldots \beta'_{k'}$, define $\beta\cdot \beta'$ to be the integer composition $\beta_1\ldots\beta_k\beta'_1\ldots\beta'_{k'}$ and $\beta\odot \beta'$ to be 
the integer composition $\beta_1\ldots\beta_{k-1}(\beta_k+\beta'_1)\beta'_2\ldots\beta'_{k'}$. Given a composition $\beta=\beta_1\beta_2\ldots\beta_l$, we associate to it the 2-pointed vertex-weighted path $P_\beta = (P,w,s,t)$ where $P=v_1v_2\ldots v_l$, $w(v_i)=\beta_i$ for each $i$ and $s=v_1$ and $t=v_l$. It is not difficult to check that $P_{\beta\cdot \beta'} = P_{\beta}\cdot P_{\beta'}$ and $P_{\beta\odot \beta'} = P_\beta\odot P_{\beta'}$.
The $\mathcal{L}$-polynomial of $\beta$, introduced in \cite{proper}, is defined by 
\[\mathcal{L}_\beta(\mathbf{x}) = \sum_{\gamma\geq \beta} \mathbf{x}_{\lambda(\gamma)},\]
where $\mathbf{x}$ is shorthand for the variables $x_1,x_2,\ldots$, $\lambda(\gamma)$ denotes the partition obtained from $\gamma$ after reordering its elements and $\mathbf{x}_\lambda = x_{\lambda_1}x_{\lambda_2}\cdots x_{\lambda_l}$ if $\lambda$ has length $l$.

\begin{lemma}The following statements hold:
\begin{enumerate}
\item 
Given two $2$-pointed vertex-weighted graphs $(G,w,s,t)$ and $(H,w',s',t')$, we have 
\begin{equation}
\label{eq:w2}
    W_{G\cdot H}(\mathbf{x},y) = W_G(\mathbf{x},y)W_{H}(\mathbf{x},y)+W_{G\odot H}(\mathbf{x},y).
\end{equation}
\item For every composition $\beta$ we have 
\[W_{P_\beta}(\mathbf{x},y) = \mathcal{L}_\beta(\mathbf{x})\]
\item Given two compositions $\beta$ and $\beta'$, we have 
\begin{equation}
\label{eq:L}
\mathcal{L}_{\beta\cdot \beta'}=\mathcal{L}_{\beta} \mathcal{L}_{\beta'}+\mathcal{L}_{\beta\odot \beta'}.
\end{equation}
\end{enumerate}
\end{lemma}

\begin{proof}
The first assertion  follows directly from applying deletion-contraction to $G\cdot H$ and the edge $e$ connecting $t$ with $s'$ and the definitions of $\cdot$ and $\odot$. 

For the second assertion, we proceed by induction on the length of $\beta$. The base case follows easily, since 
if $\beta=\beta_1$, then $\mathcal{L}_\beta(\mathbf{x})=x_{\beta_1}$, which is the same as the $W$-polynomial of an isolated vertex with weight $\beta_1$. For the inductive step, we suppose that the assertion holds for compositions of length $n$ and let $\beta=\beta'\beta_{n+1}$ be any integer composition of length $n+1$, where $\beta'$ is a composition of length $n$. It is easy to check  that $\beta'\odot\beta_{n+1}\geq \beta'$ and that if $\gamma\geq\beta$ but $\gamma\not\geq \beta'\odot\beta_{n+1}$, then $\gamma=\gamma'\beta_{n+1}$ where $\gamma'\geq\beta'$. Thus
\[\mathcal{L}_\beta(\mathbf{x})=\sum_{\gamma\geq\beta}x_{\lambda(\gamma)}=\sum_{ \gamma\geq \beta'\odot\beta_{n+1}}x_{\lambda(\gamma)}+\sum_{\gamma'\geq \beta'}x_{\lambda(\gamma'\beta_{n+1})}=\mathcal{L}_{\beta'\odot\beta_{n+1}}(\mathbf{x})+\mathcal{L}_{\beta'}(\mathbf{x})x_{\beta_{n+1}}.\] 
Hence, by the induction hypothesis, we get from the last equality
\[\mathcal{L}_\beta(\mathbf{x}) = W_{P_{\beta'\odot\beta_{n+1}}}(\mathbf{x},y)+W_{P_{\beta'}}(\mathbf{x},y)W_{P_{\beta_{n+1}}}(\mathbf{x},y)=W_{P_\beta}(\mathbf{x},y),\]
where the last equality follows from \eqref{eq:w2}. This finishes the proof of the second assertion. The third assertion is a direct consequence of the previous two assertions. 
\end{proof}
Next, we will see how to use this lemma to construct arbitrarily large families of vertex-weighted trees with the same $W$-polynomial by gluing together several copies of a seed graph. For a 2-pointed vertex-weighted graph $G$, we denote the graph $G\cdot G\cdot \ldots \cdot G$ by $G^i$ and the graph $G\odot G\odot\ldots\odot G$ by $G^{\odot i}$ (in each definition the graph $G$ is repeated $i$ times). 

If $G$ a 2-pointed vertex-weighted graph and $\beta$ is an integer composition, then define
\[ \beta\circ G := G^{\odot \beta_1}\cdot G^{\odot\beta_2}\cdots G^{\odot\beta_k}.\]
Similarly, if $\gamma$ is another integer composition, we can define 
\[ \beta\circ \gamma:= \gamma^{\odot \beta_1}\cdot \gamma^{\odot\beta_2}\cdots \gamma^{\odot\beta_k}.\]
It is not difficult to check that $\beta\circ P_\gamma = P_{\beta\circ \gamma}$ and that 
\begin{equation}
\label{eq:distribute}
(\alpha *\beta)\circ\gamma = (\alpha\circ \gamma)*(\beta\circ\gamma),
\end{equation}
 where $\alpha,\beta$ are compositions, $\gamma$ can be either a $2$-pointed vertex-weighted graph or a composition, and $*$ may be either $\cdot$ or $\odot$. 
\begin{lemma}
The operation $\circ$ in the set of integer compositions is associative. Moreover, if $(G,w,s,t)$ is a $2$-pointed vertex-weighted graph and $\alpha$ and $\beta$ are integer compositions, then 
\[ (\alpha\circ\beta)\circ G = \alpha\circ(\beta\circ G).\] 
\end{lemma}
\begin{proof}
The associativity of $\circ$ for integer compositions is shown in \cite[Proposition 3.3]{decomposable}. The second assertion will be shown by induction on the length of $\alpha$. If $\alpha=\alpha_1$, then 
\begin{align*}
\alpha_1\circ(\beta\circ G) = (\beta\circ G)^{\odot\alpha_1} &= 
(G^{\odot \beta_1}\cdots G^{\odot \beta_k})^{\odot \alpha_1}\\
&= (G^{\odot \beta_1}\cdots G^{\odot \beta_k})\odot (G^{\odot \beta_1}\cdots G^{\odot \beta_k})\odot\cdots\odot
(G^{\odot \beta_1}\cdots G^{\odot \beta_k})\\
&=G^{\odot \beta_1}\cdots G^{\odot \beta_k}\odot G^{\odot \beta_1}\cdots G^{\odot \beta_k}\odot\cdots\odot
G^{\odot \beta_1}\cdots G^{\odot \beta_k}\\
&=G^{\odot \beta_1}\cdots G^{\odot \beta_{k-1}}\cdot (G^{\odot \beta_k+\beta_1}\cdot G^{\odot \beta_2}\cdots G^{\odot \beta_{k-1}})^{\alpha_1-1}G^{\odot \beta_k}\\
&= \Big(\beta_1\cdots\beta_{k-1}\big((\beta_{k}+\beta_1)\beta_2\cdots\beta_{k-1}\big)^{\alpha_1-1}\beta_k\Big)\circ G = (\alpha_1\circ\beta)\circ G,
\end{align*}
which shows the base case. For the inductive step, given any composition $\alpha$ of length $l$, we may write it as $\alpha = \alpha'\alpha_{l}$ where $\alpha'$ is a composition of length $l-1$. By applying \eqref{eq:distribute}, we get
\[ (\alpha'\alpha_l)\circ (\beta\circ G) = \big(\alpha'\circ (\beta\circ G)\big)\cdot \big(\alpha_l\circ (\beta\circ G)\big).\]
By applying the induction hypothesis twice and then \eqref{eq:distribute} again, the expresion in the right hand side of the last equation 
becomes
\[((\alpha'\circ\beta)\circ G) \cdot ((\alpha_l\circ\beta)\circ G)
 =\big((\alpha'\circ\beta\big) \cdot (\alpha_l\circ\beta))\circ G = (\alpha\circ \beta)\circ G.\]
This finishes the induction, and hence the proof.
\end{proof}

The utility of the $\circ$ operation is that the $W$-polynomial of $\beta\circ G$ can be computed in terms 
of the $\mathcal{L}$-polynomial of $\beta$ and the $W$-polynomial of $G$.

\begin{theorem}
\label{thm:plet}
Let $(G,w,s,t)$ be a 2-pointed vertex-weighted graph and $\beta$ a composition. We have
\begin{equation}
    \label{eq:circ}
W_{\beta\circ G}(\mathbf{x},y) = \mathcal{L}_\beta (x_i\mapsto W_{G^{\odot i}}(\mathbf{x},y))
\end{equation}
\end{theorem}
\begin{proof}
We do the proof by induction on the length $l$ of $\beta$. For the base case, assume $l=1$ and check that  $\beta\circ G = G^{\odot \beta_1}$ and $\mathcal{L}_\beta(x) = x_{\beta_1}$. Thus, substituting $x_{\beta_1}\mapsto W_{G^{\odot \beta_1}}(\mathbf{x},y)$ yields the assertion when $l=1$.

Now, using the inductive hypothesis, we will assume that the assertion holds for all compositions of length $k$ and consider $\beta=\beta'\beta_{k+1}$, where $\beta'=\beta_1\ldots\beta_k$. By the definition of $\circ$, we have 
\[\beta\circ G = (\beta'\circ G)\cdot (\beta_{k+1}\circ G).\]
thus, applying \eqref{eq:w2} yields

\begin{equation}
\label{eq:w3}
W_{\beta\circ G}(\mathbf{x},y) = W_{\beta'\circ G}(\mathbf{x},y)W_{\beta_{k+1}\circ G}(\mathbf{x},y)+W_{(\beta'\circ G)\odot ({\beta_{k+1}\circ G)}}(\mathbf{x},y).
\end{equation}
It is easy to check that $(\beta'\circ G)\odot (\beta_{k+1}\circ G) = (\beta'\odot \beta_{k+1})\circ G$. On the other hand, by the induction hypothesis,
\[W_{\beta'\circ G}(\mathbf{x},y) = \mathcal{L}_{\beta'} (x_i\mapsto W_{G^{\odot i}}(\mathbf{x},y)),\]
\[W_{\beta_{k+1}\circ G}(\mathbf{x},y) = \mathcal{L}_{\beta_{k+1}} (x_i\mapsto W_{G^{\odot i}}(\mathbf{x},y))\]
and
\[W_{(\beta'\odot\beta_{k+1})\circ G}(\mathbf{x},y) = \mathcal{L}_{(\beta'\odot\beta_{k+1})} (x_i\mapsto W_{G^{\odot i}}(\mathbf{x},y)).\]
Combining these into \eqref{eq:w3}, we get 
\begin{equation}
\label{eq:w4}
W_{\beta\circ G}(\mathbf{x},y) = (\mathcal{L}_{\beta'}\mathcal{L}_{\beta_{k+1}} + \mathcal{L}_{\beta'\odot\beta_{k+1}} )(x_i\mapsto W_{G^{\odot i}}(\mathbf{x},y))\end{equation}
and thence the assertion follows by applying \eqref{eq:L} to the last equation.
\end{proof}
\begin{cor}
\label{cor:weighted}
Let $\beta$ be an integer composition and $(G,w,s,t)$ be a  $2$-pointed vertex-weighted graph. Then for every composition $\gamma$ such that $\mathcal{L}_\gamma=\mathcal{L}_\beta$ we have
\[ W_{\beta\circ G}(\mathbf{x},y) = W_{\gamma\circ G}(\mathbf{x},y).\]
\end{cor}
This motivates us to characterize the class of compositions with the same $\mathcal{L}$-polynomial as a given composition. This characterization was obtained in \cite{decomposable} in the language of ribbon Schur functions and later recast in \cite{proper} to the language we are using here. If a composition $\beta$ is written in the form $\beta_1\circ\beta_2\circ\cdots\beta_k$, then we call this a \emph{factorization} of $\beta$. We say that the factorization $\beta=\beta_1\circ\beta_2$ is
\emph{trivial} if any of the following conditions hold: 
\begin{enumerate}
\item  one of $\beta_1,\beta_2$ are equal to $1$,
\item  the lengths of $\beta_1$ and $\beta_2$ are both equal to $1$,
\item  the compositions $\beta_1$ and $\beta_2$ both have all parts equal to $1$. 
\end{enumerate}

A factorization $\beta = \beta_1\circ\cdots\circ\beta_k$ is \emph{irreducible} if no $\beta_i\circ \beta_{i+1}$ is a trivial factorization, and each $\beta_i$ admits only trivial factorizations. In this case, each $\beta_i$ is called an \emph{irreducible factor}.

\begin{theorem}[\cite{decomposable,proper}]
\label{thm:decomposable} 
The irreducible factorization of any integer composition is unique. 
Moreover  if $\beta = \beta_1\circ\beta_2\circ\cdots\circ\beta_l$ and $\gamma = \gamma_1\circ\gamma_2\circ\cdots\circ\gamma_k$ are two compositions with given irreducible factorizations, then $\beta$ and $\gamma$ have the same $\mathcal{L}$-polynomial if and only if 
\[l = k \quad\text{and}\quad \gamma_i\in\{\beta_i,\beta_i^*\}\text{ for all $i$ in $\{1,\ldots,k\}$}.\] 
\end{theorem}
\begin{remark}
By the second assertion of Lemma \ref{lem:Wpoly} and \eqref{eq:XBsub} and the fact that $X$ can be recovered from $XB$ by setting $t=-1$ (which is equivalent to setting $y=0$ in the $W$-polynomial) we check that substituting each variable $x_i$ by $-p_i$  in $\mathcal{L}_\beta$ yields the weighted chromatic symmetric function of $P_\beta$. On the other hand, if we substitute each variable $x_i$ by $-h_i$, where $h_i$ is the $i$-th complete homogeneous symmetric function, we obtain the ribbon Schur function associated with $\beta$. It follows that the homomorphism $U$ of $\Lambda$ defined by sending each $p_\lambda$ to $h_\lambda$ and then extending linearly transforms the chromatic symmetric function of the weighted path $P_\beta$ into the the ribbon Schur function associated with $\beta$. This observation is implicit in \cite{proper} and the morphism $U$ is studied in detail in \cite{aliniaeifard2020extended}.
\end{remark}

Combining Corollary \ref{cor:weighted} and Theorem \ref{thm:decomposable} we get
\begin{cor}
Let $(G,w,s,t)$ be a 2-pointed weighted graph and $\beta$ an integer composition. Suppose  we have an irreducible  factorization $\beta=\beta_1\circ\beta_2\circ\cdots\circ\beta_k$. Then, all vertex-weighted graphs in the set 
\[ Sym(\beta\circ G)=\{ \gamma_1\circ\gamma_2\circ\cdots\circ\gamma_i\circ G\mid \text{for each $i$}, \gamma_i\in\{\beta_i,\beta_i^*\}\}
\]
have the same $W$-polynomial as $\beta\circ G$.
\end{cor}

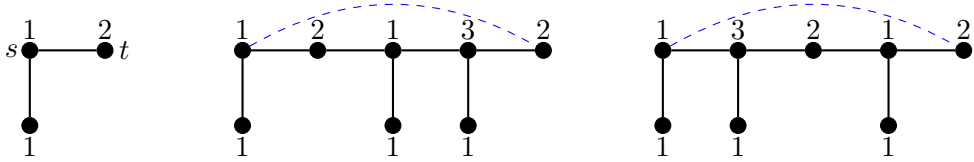
\begin{figure}[!htb]
\begin{center}
\begin{tikzpicture}
\node[label=below:{$1$},, fill=black, circle] at (0, -1)(v1){};
\node[label=above:{$1$},label=left:{$s$}, fill=black, circle] at (0, 0)(v2){};
\node[label=above:{$2$},label=right:{$t$}, fill=black, circle] at (1, 0)(v3){};
\draw[black, thick] (v1) -- (v2);
\draw[black, thick] (v2) -- (v3);
\end{tikzpicture}
\hspace{1cm}
\begin{tikzpicture}
\node[label=below:{$1$}, fill=black, circle] at (0, -1)(v1){};
\node[label=above:{$1$}, fill=black, circle] at (0, 0)(v2){};
\node[label=above:{$2$}, fill=black, circle] at (1, 0)(v3){};
\node[label=below:{$1$}, fill=black, circle] at (2, -1)(v4){};
\node[label=above:{$1$}, fill=black, circle] at (2, 0)(v5){};
\node[label=above:{$3$}, fill=black, circle] at (3, 0)(v6){};
\node[label=below:{$1$}, fill=black, circle] at (3, -1)(v7){};
\node[label=above:{$2$}, fill=black, circle] at (4, 0)(v8){};
\draw[black, thick] (v1) -- (v2);
\draw[black, thick] (v2) -- (v3) -- (v5)--(v6)--(v8);
\draw[black, thick] (v6) -- (v7);
\draw[black, thick] (v5) -- (v4);
\draw[blue, dashed] (v2) edge [bend left = 30] (v8);
\end{tikzpicture} 
\hspace{1cm}
\begin{tikzpicture}
\node[label=above:{$1$}, fill=black, circle] at (0, 0)(v2){};
\node[label=above:{$3$}, fill=black, circle] at (1, 0)(v3){};
\node[label=above:{$2$}, fill=black, circle] at (2, 0)(v5){};
\node[label=above:{$1$}, fill=black, circle] at (3, 0)(v6){};
\node[label=above:{$2$}, fill=black, circle] at (4, 0)(v8){};
\node[label=below:{$1$}, fill=black, circle] at (0, -1)(v1){};
\node[label=below:{$1$}, fill=black, circle] at (1, -1)(v4){};
\node[label=below:{$1$}, fill=black, circle] at (3, -1)(v7){};
\draw[black, thick] (v1) -- (v2);
\draw[black, thick] (v2) -- (v3) -- (v5)--(v6)--(v8);
\draw[black, thick] (v6) -- (v7);
\draw[black, thick] (v3) -- (v4);
\draw[blue, dashed] (v2) edge [bend left = 30] (v8);
\end{tikzpicture} 
\end{center}
\caption{The seed tree $T$, and the trees $12\circ T$ and $21\circ T$. The latter two trees have the same $W$-polynomial (the dashed line is a non-edge).}
\label{fig:123}
\end{figure}

Finally, we apply these results to give two examples of weighted trees with the same $W$-polynomial (and hence Tutte symmetric function). For the first example we consider the $2$-pointed vertex-weighted tree $T$ depicted in Figure \ref{fig:123} 
and the composition $\beta= 1 2$. Then, it follows that the weighted trees $\beta\circ T$ and $\beta^*\circ T$ have the same $W$-polynomial but they are not $w$-isomorphic. In fact, they are not even isomorphic as unweighted trees. These trees are also shown in Figure \ref{fig:123}. However, in this case,  there is an alternate way of checking that the trees have the same $W$-polynomial: It may be checked that the graphs are $w$-isomorphic when adding the dashed non-edge and when contracting the dashed non-edge, so it follows from Lemma \ref{lem:eqx}. Our construction can also be used to obtain a more complicated pair of examples (that cannot be constructed by the direct use of deletion-contraction operations). Consider $\beta\circ T$ and $\gamma\circ T$ where $T$ is the same $2$-pointed vertex-weighted tree depicted in Figure \ref{fig:123}, $\beta=1 2 1 3 2 = 1 2 \circ 1 2$ and $\gamma=1 3 2 1 2 = 2 1 \circ 1 2$. By applying Corollary \ref{cor:weighted} we see that these weighted trees have the same $W$-polynomial, but clearly they are not $w$-isomorphic; they are depicted in Figure \ref{fig:1232}.
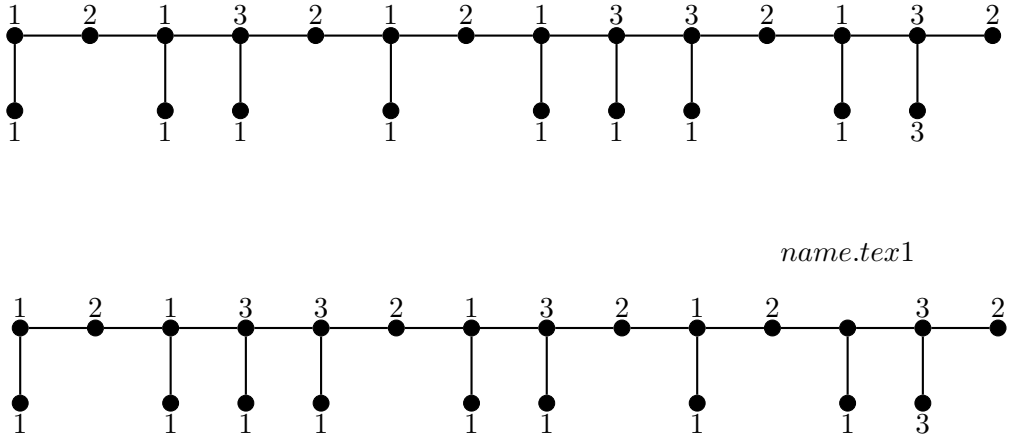
\begin{figure}
\begin{center}
\begin{tikzpicture}
\node[label=above:{$1$}, fill=black, circle] at (0, 0)(v1){};
\node[label=above:{$2$}, fill=black, circle] at (1, 0)(v2){};
\node[label=above:{$1$}, fill=black, circle] at (2, 0)(v3){};
\node[label=above:{$3$}, fill=black, circle] at (3, 0)(v4){};
\node[label=above:{$2$}, fill=black, circle] at (4, 0)(v5){};
\node[label=above:{$1$}, fill=black, circle] at (5, 0)(v6){};
\node[label=above:{$2$}, fill=black, circle] at (6, 0)(v7){};
\node[label=above:{$1$}, fill=black, circle] at (7, 0)(v8){};
\node[label=above:{$3$}, fill=black, circle] at (8, 0)(v9){};
\node[label=above:{$3$}, fill=black, circle] at (9, 0)(v10){};
\node[label=above:{$2$}, fill=black, circle] at (10, 0)(v11){};
\node[label=above:{$1$}, fill=black, circle] at (11, 0)(v12){};
\node[label=above:{$3$}, fill=black, circle] at (12, 0)(v13){};
\node[label=above:{$2$}, fill=black, circle] at (13, 0)(v14){};
\draw[black, thick] (v1) -- (v2) -- (v3) -- (v4) -- (v5)--(v6)--(v7)--(v8)--(v9)--(v10)--(v11)--(v12)--(v13)--(v14);
\node[label=below:{$1$}, fill=black, circle] at (0,-1)(a1) {};
\node[label=below:{$1$}, fill=black, circle] at (2,-1)(a3) {};
\node[label=below:{$1$}, fill=black, circle] at (3,-1)(a4) {};
\node[label=below:{$1$}, fill=black, circle] at (5,-1)(a6) {};
\node[label=below:{$1$}, fill=black, circle] at (7,-1)(a8) {};
\node[label=below:{$1$}, fill=black, circle] at (8,-1)(a9) {};
\node[label=below:{$1$}, fill=black, circle] at (9,-1)(a10) {};
\node[label=below:{$1$}, fill=black, circle] at (11, -1)(a12){};
\node[label=below:{$3$}, fill=black, circle] at (12, -1)(a13){};
\draw[black,thick] (v1)--(a1);
\draw[black,thick] (v3)--(a3);
\draw[black,thick] (v4)--(a4);
\draw[black,thick] (v6)--(a6);
\draw[black,thick] (v8)--(a8);
\draw[black,thick] (v9)--(a9);
\draw[black,thick] (v10)--(a10);
\draw[black,thick] (v12)--(a12);
\draw[black,thick] (v13)--(a13);
\end{tikzpicture} 
\vspace{0.5cm}

\begin{tikzpicture}
\node[label=above:{$1$}, fill=black, circle] at (0, 0)(v1){};
\node[label=above:{$2$}, fill=black, circle] at (1, 0)(v2){};
\node[label=above:{$1$}, fill=black, circle] at (2, 0)(v3){};
\node[label=above:{$3$}, fill=black, circle] at (3, 0)(v4){};
\node[label=above:{$3$}, fill=black, circle] at (4, 0)(v5){};
\node[label=above:{$2$}, fill=black, circle] at (5, 0)(v6){};
\node[label=above:{$1$}, fill=black, circle] at (6, 0)(v7){};
\node[label=above:{$3$}, fill=black, circle] at (7, 0)(v8){};
\node[label=above:{$2$}, fill=black, circle] at (8, 0)(v9){};
\node[label=above:{$1$}, fill=black, circle] at (9, 0)(v10){};
\node[label=above:{$2$}, fill=black, circle] at (10, 0)(v11){};
\node[label=above:{$name.tex1$}, fill=black, circle] at (11, 0)(v12){};
\node[label=above:{$3$}, fill=black, circle] at (12, 0)(v13){};
\node[label=above:{$2$}, fill=black, circle] at (13, 0)(v14){};
\draw[black, thick] (v1) -- (v2) -- (v3) -- (v4) -- (v5)--(v6)--(v7)--(v8)--(v9)--(v10)--(v11)--(v12)--(v13)--(v14);
\node[label=below:{$1$}, fill=black, circle] at (0,-1)(a1) {};
\node[label=below:{$1$}, fill=black, circle] at (2,-1)(a3) {};
\node[label=below:{$1$}, fill=black, circle] at (3,-1)(a4) {};
\node[label=below:{$1$}, fill=black, circle] at (4,-1)(a5) {};
\node[label=below:{$1$}, fill=black, circle] at (7,-1)(a8) {};
\node[label=below:{$1$}, fill=black, circle] at (6,-1)(a7) {};
\node[label=below:{$1$}, fill=black, circle] at (9,-1)(a10) {};
\node[label=below:{$1$}, fill=black, circle] at (11, -1)(a12){};
\node[label=below:{$3$}, fill=black, circle] at (12, -1)(a13){};
\draw[black,thick] (v1)--(a1);
\draw[black,thick] (v3)--(a3);
\draw[black,thick] (v4)--(a4);
\draw[black,thick] (v5)--(a5);
\draw[black,thick] (v8)--(a8);
\draw[black,thick] (v7)--(a7);
\draw[black,thick] (v10)--(a10);
\draw[black,thick] (v12)--(a12);
\draw[black,thick] (v13)--(a13);
\end{tikzpicture} \end{center}
\caption{Weighted trees with the same Tutte symmetric function.}
\label{fig:1232}
\end{figure}

\end{section}

\end{section}

\begin{section}{Further Research}
We conclude with some data and further possible considerations for research.

Using deletion-contraction relations, we computed $X_G$ and $XB_G$ for simple graphs with at most $8$ vertices using data provided by \cite{mckay}.  We found many pairs of such graphs with equal chromatic symmetric function, and for all of these pairs we also determined whether the graphs are distinguished by $XB$.  This information and more may be viewed at \cite{data}.

 We find that triangles seem to play an important role in graphs with equal chromatic symmetric function.  Note that in the $1000$ pairs of graphs with equal chromatic symmetric function noted in \cite{data}, every graph contains a triangle. Furthermore, each of the three methods given in Sections 5.1 and 5.2 for constructing graphs with equal chromatic symmetric function always produces a pair of graphs containing triangles.  In the case of Lemma \ref{Lem:Split} and Theorem \ref{thm:ore} this is explicit.  In the case of Lemma \ref{lem:eqX}, suppose that we have a graph $G$ satisfying the assumptions of the lemma.  If $N(v_3) = \emptyset$, then $G \cup v_1v_3$ is isomorphic to $G \cup v_2v_3$ since by assumption there is an automorphism of $G \bk v_3$ swapping $v_1$ and $v_2$.  If there is a vertex $x \in N(v_3)$, then by assumption also $x \in N(v_1)$ and $x \in N(v_2)$, so in $G \cup v_1v_3$ there is a triangle with vertices $v_1,v_3,x$ and in $G \cup v_2v_3$ there is a triangle with vertices $v_2,v_3,x$.  Thus, every $G$ satisfying the conditions of Lemma \ref{lem:eqX} either produces two isomorphic graphs, or two graphs with equal chromatic symmetric function that both contain triangles.  Finally, we also note the recent result of Penaguiao \cite{raul} showing that given any two nonisomorphic graphs with equal chromatic symmetric function, one may be transformed into the other by a finite number of applications of the triangular modular relation of Orellana and Scott \cite{ore}.

Indeed, prior to the discovery of the construction given in Section 6.3, the authors considered whether triangle-free graphs may be distinguished by the chromatic symmetric function! It would be useful if it could be demonstrated an explicit sequence of applications of the modular relation of \cite{ore} that takes these triangle-free graphs to each other.  As far as distinguishing graphs goes, the next logical question is to determine whether there are bipartite graphs with equal chromatic symmetric function.

The spanning tree formula \eqref{eq:spanX} for the chromatic symmetric function is new, and may be useful to ongoing research.  Furthermore, the sum runs over those spanning trees of a graph $G$ with no external activity.  It is worth noting that the number of this particular kind of spanning tree in a graph $G$ is equal to the number of $G$-parking functions with respect to any vertex, and the number of acyclic orientations of $G$ with exactly one sink \cite{gparking}.  It would be interesting to see if there are similar expansions to \eqref{eq:spanX} that run over one of these sets.

Additionally, there is an expansion of the Tutte polynomial as a sum over $G$-parking functions given in \cite{tutteG} as 
$$
T_G(x,y) = \sum_{f} x^{cb(f)}y^{w(f)}
$$
where $cb(f)$ is the number of critical bridge vertices of $G$ with respect to $f$ (as defined in \cite{tutteG}), and $w(f) = |E(G)|-|V(G)|-\sum_{v \in V(G)} f(v)$.  In the same way that the spanning tree formula \eqref{eq:tree} for $XB$ is an extension of a similar formula for $T_G$, perhaps there is a natural formula for $XB$ that extends this $G$-parking function expansion of $T_G$.

Finally, there appear to be many rich, unexplored connections between the Tutte symmetric function and other functions derived from the $V$-polynomial.  For example, one can use a specialization of the $V$-polynomial as a natural list-coloring polynomial \cite{listcolor}.  It would be interesting to see if the Tutte symmetric function could be modified to consider this or other $V$-polynomial specializations.

\end{section}

\begin{section}{Acknowledgments}
The authors would like to thank Jo Ellis-Monaghan for telling us about the $V$-polynomial, Martin Loebl for discussions about the split graph construction and the $U$-polynomial, and Steve Noble, Bruce Sagan, Darij Grinberg, Steph van Willigenburg, Farid Aliniaiefard  and Victor Wang for many helpful comments. We would like to thank the anonymous referee for their thoughtful comments and suggestions.

The authors would also like to acknowledge Brendan McKay's webpage of combinatorial data \cite{mckay}.  Its database of graphs and trees has been a valuable resource for creating and testing our conjectures, and was helpful for creating \cite{data}.

The first version of this article did not contain the results of Section 7. While working on this version of the article, the authors became aware of the independent work \cite{aliniaeifard2020extended}. 

This material is based upon work supported by the National Science Foundation under Award No. DMS-1802201. We acknowledge the support of the Natural Sciences and Engineering Research Council of Canada (NSERC), [funding
reference number RGPIN-2020-03912]. J.A.-P. and J.Z. acknowledge support from CONICYT FONDECYT REGULAR 1160975.

\end{section}

\bibliographystyle{plain}
\bibliography{bib}

\end{document}